\DeclareFontFamily{U}{mathx}{\hyphenchar\font45}
\DeclareFontShape{U}{mathx}{m}{n}{
	<5> <6> <7> <8> <9> <10>
	<10.95> <12> <14.4> <17.28> <20.74> <24.88>
	mathx10
}{}
\DeclareSymbolFont{mathx}{U}{mathx}{m}{n}
\DeclareMathAccent{\widecheck}{0}{mathx}{"71}
\DeclareMathAccent{\widebar}{0}{mathx}{"73}
\mathchardef\mhyphen="2D
\newtheorem{theorem}{Theorem}[section]
\newtheorem*{theorem*}{Theorem}
\newtheorem{proposition}[theorem]{Proposition}
\newtheorem{lemma}[theorem]{Lemma}
\newtheorem{claim}{Claim}[theorem]
\crefname{claim}{Claim}{Claims}
\theoremstyle{definition}
\newtheorem{definition}[theorem]{Definition}
\newtheorem*{notation}{Notation}
\theoremstyle{remark}
\newtheorem{remark}[theorem]{Remark}
\newtheorem{example}[theorem]{Example}
\crefname{example}{Example}{Examples}
\newtheorem{case}{Case}[claim]
\newcommand{\NN}{\mathbb{N}}
\newcommand{\ZZ}{\mathbb{Z}}
\renewcommand{\O}{\mathcal{O}}
\newcommand{\C}{\mathscr{C}}
\newcommand{\im}{\operatorname{im}}
\newcommand{\defeq}{\overset{\text{def}}{=}}
\newcommand{\op}{^\mathrm{op}}
\newcommand{\id}{\mathrm{id}}
\newcommand{\incl}{\hookrightarrow}
\newcommand{\ds}{\displaystyle}
\newcommand{\omegaCat}{\omega\mhyphen\underline{\mathrm{Cat}}}
\newcommand{\Set}{\underline{\mathrm{Set}}}
\newcommand{\sSet}{\underline{\mathrm{sSet}}}
\newcommand{\msSet}{\underline{\mathrm{msSet}}}
\newcommand{\ADC}{\underline{\mathrm{ADC}}}
\newcommand{\filler}[1]{\mathbin{\bullet_{#1}}}
\newcommand{\paste}[1]{\mathbin{\circ_{#1}}}
\newcommand{\face}[1]{\delta_{#1}}
\newcommand{\degen}[1]{\sigma_{#1}}
\newcommand{\horn}[2]{\Lambda^{#2}[#1]}
\newcommand{\asimp}[2]{\Delta^{#2}[#1]}
\newcommand{\asimpd}[2]{\Delta^{#2}[#1]'}
\newcommand{\asimpdd}[2]{\Delta^{#2}[#1]''}
\newcommand{\rank}{\operatorname{rank}}
\newcommand{\corank}{\operatorname{corank}}
\newcommand{\level}{\operatorname{level}}
\newcommand{\supp}{\operatorname{supp}}
\newcommand{\bbar}[1]{\bar{\bar{#1}}}
\newcommand{\Deltainj}{\Delta_{\mathrm{inj}}}
\newcommand{\join}{\oplus}
\newcommand{\quadand}{\quad\text{and}\quad}
\newcommand{\lex}{_{\mathrm{lex}}}
\colorlet{palegray}{gray!50!white}
\title{Orientals as free weak $\omega$-categories}
\author{Yuki Maehara}
\address{Institute of Mathematics for Industry, Kyushu University, Fukuoka, Japan}
\email{y-maehara@imi.kyushu-u.ac.jp}
\subjclass[2020]{18N30, 18N40, 18N50, 18N65}
\begin{document}
	\maketitle


	\begin{abstract}
		The \emph{orientals} are the free strict $\omega$-categories on the simplices introduced by Street.
		The aim of this paper is to show that they are also the free \emph{weak} $\omega$-categories on the same generating data.
		More precisely, we exhibit the complicial nerves of the orientals as fibrant replacements of the simplices in Verity's model structure for weak complicial sets.
	\end{abstract}

	\section*{Introduction}
	This paper concerns the simplicial approach to the theory of higher-dimensional categories.
	The fundamental notion in this approach is that of \emph{complicial set} introduced by Roberts \cite{Roberts:complicial}; a simplicial set equipped with a distinguished subset of \emph{marked} simplices satisfying horn-filling conditions akin to those for Kan complexes.
	Although Roberts conjectured the complicial sets to be precisely the nerves of the (strict) $\omega$-categories, even constructing the desired nerve functor proved to be rather difficult.
	The construction was eventually achieved by Street \cite{Street:oriented} who realised the simplices as $\omega$-categories in the form of \emph{orientals}.
	This led to a precise formulation of Roberts' conjecture, which was subsequently proven by Verity \cite[Theorem 266]{Verity:strict}.
	
	Verity then began the study of \emph{weak complicial sets} \cite{Verity:I,Verity:II}, a homotopical variant of the strict notion.
	They model \emph{weak $\omega$-categories} (or \emph{$(\infty,\infty)$-categories}), \emph{i.e.}\;structures that only satisfy the usual axioms for $\omega$-category up to coherent, invertible higher cells.
	Among many other results, Verity constructed a model structure capturing the homotopy theory of weak complicial sets.
	
	The aim of this paper is to contribute to this theory by showing that the orientals, originally introduced as the free \emph{strict} $\omega$-categories on the simplices, are also the free \emph{weak} $\omega$-categories on the same generating data.
	More precisely, in \cref{main} we exhibit the complicial nerve of the $n$-th oriental $\O_n$ as a fibrant replacement of $\Delta[n]$ in Verity's aforementioned model structure.
	(Consequently, the nerve of $\O_n$ is weakly equivalent to $\Delta[n]$ in any Bousfield localisation thereof such as the saturated and $n$-trivial complicial model structures.)
	
	In many ways, this paper draws insights from Steiner's analysis of the orientals in \cite{Steiner:orientals}.
	In particular, we make use of his description of the $\omega$-functors $\O_m \to \O_n$ as certain formal linear combinations of maps $[m] \to [n]$ in $\Delta$, and much of our proof is based on ideas that can be found in \cite[Propositions 5.9-11]{Steiner:orientals}.
	
	After posting the first version of this paper on arXiv, we learnt that Gagna, Ozornova and Rovelli were working on a similar problem.
	Their preprint is now available at \cite{GOR:cones}.
	Since they consider cones over (or under) more general $\omega$-categories than just the orientals, their result is strictly stronger than ours.
	On the other hand, if one is only interested in the special cases of the orientals, our combinatorics is considerably shorter.

	
	\section{Background}
	The combinatorics in this paper relies on Steiner's description \cite{Steiner:orientals} of the $\omega$-functors between the orientals as certain formal linear combinations of maps in $\Delta$.
	This description is reviewed in \cref{subsection O}, and \cref{subsection pasting} discusses how Steiner recovers a notion of composition in this framework.
	On the other hand, the main result of this paper is formulated using the language of complicial sets.
	We present minimal background on complicial sets in \cref{subsection nerve}, and refer the interested reader to \cite{Verity:strict,Verity:I,Verity:II} for more on this subject.
	\cref{subsection complicial} discusses how the notions of identity and composition in the two frameworks agree.

	\begin{notation}
		Given a simplicial set $X \in \sSet \defeq \bigl[\Delta^{\mathrm{op}},\Set\bigr]$, an element $x \in X_n$ and a simplicial operator $\alpha : [m] \to [n]$, we write $x\alpha$ for the image of $x$ under $X(\alpha) : X_n \to X_m$.
		Note that we have $(x\alpha)\beta = x(\alpha\beta)$ in this notation whenever either side is defined.
	\end{notation}

	\subsection{Orientals and Steiner's category $\O$}\label{subsection O}
	The \emph{$n$-th oriental} $\O_n$, introduced by Street \cite{Street:oriented}, is the free $\omega$-category on the $n$-simplex with its atomic $m$-cells in bijective correspondence with the injective maps $[m] \to [n]$ in $\Delta$.
	The following is a slightly more precise (but still informal) description of $\O_n$.
	\begin{itemize}
		\item The zeroth oriental $\O_0$ is the terminal $\omega$-category, consisting of a single $0$-cell and no non-identity higher cells.
		\item For $n \ge 1$, the ``boundary'' of $\O_n$ may be constructed by gluing $n+1$ copies of $\O_{n-1}$ according to the structure of $\partial\Delta[n]$.
		The ``interior'' of $\O_n$ is then filled with an $n$-cell that points from the composite of odd faces to the composite of even ones.
	\end{itemize}
	We have drawn (the atomic cells in) $\O_n$ for $n \le 3$ in \cref{example0,example3}.
	These drawings are essentially taken from \cite{Street:oriented}, where $\O_4$, $\O_5$ and $\O_6$ can also be found.
	In these figures, and also throughout this paper, we denote a simplicial operator $\alpha : [m] \to [n]$ by the sequence of its images $\bigl(\alpha(0),\dots,\alpha(m)\bigr)$.
	
	\begin{figure}
		\[
		\begin{tabular}{c|c|c}
			$\O_0$ & $\O_1$ & $\O_2$ \\\hline
			\begin{tikzpicture}[baseline = 0]
				\node (0) at (0,0) {$(0)$};
			\end{tikzpicture}
			&
			\begin{tikzpicture}[baseline = 0]
				\node (0) at (0,0) {$(0)$};
				\node (1) at (2,0) {$(1)$};
				
				\draw[->] (0.east) -- (1.west);
				
				\node[scale = 0.8] at (1,0.2) {$(0,1)$};
			\end{tikzpicture}
			&
			\begin{tikzpicture}[baseline = 30]
				\node (0) at (0,0) {$(0)$};
				\node (2) at (4,0) {$(2)$};
				\node (1) at (2,2) {$(1)$};
				
				\draw[->] (0.east) -- (2.west);
				\draw[->] (0.north east) -- (1.south west);
				\draw[->] (1.south east) -- (2.north west);
				
				\draw[->, double] (2,0.2) -- (2,1.5);
				
				\node[scale = 0.8] at (2,-0.2) {$(0,2)$};
				\node[scale = 0.8] at (0.6,1.1) {$(0,1)$};
				\node[scale = 0.8] at (3.4,1.1) {$(1,2)$};
				
				\node[scale = 0.8, fill = white] at (2,0.8) {$(0,1,2)$};
			\end{tikzpicture}
		\end{tabular}
		\]
		\caption{$\O_0$, $\O_1$ and $\O_2$}\label{example0}
	\end{figure}
	
	\begin{figure}
		\[
		\begin{tikzpicture}[baseline = 30]
			\node (0) at (0,0) {$(0)$};
			\node (1) at (1,2) {$(1)$};
			\node (2) at (3,2) {$(2)$};
			\node (3) at (4,0) {$(3)$};
			
			\draw[->] (0.4,0) -- (3.6,0);
			\draw[->] (0.2,0.4) -- (0.8,1.6);
			\draw[->] (1.4,2) -- (2.6,2);
			\draw[->] (3.2,1.6) -- (3.8,0.4);
			
			\draw[->] (0.3,0.2) -- (2.7,1.8);
			
			\node[scale = 0.8] at (0.2,1.1) {$(0,1)$};
			\node[scale = 0.8] at (3.8,1.1) {$(2,3)$};
			\node[scale = 0.8] at (2,-0.2) {$(0,3)$};
			\node[scale = 0.8] at (2,2.2) {$(1,2)$};
			\node[scale = 0.8] at (1.8,0.8) {$(0,2)$};
			
			\draw[->, double] (1.2,1) -- (1.2,1.7);
			\node[scale = 0.6, fill = white] at (1.2,1.3) {$(0,1,2)$};
			
			\draw[->, double] (2.8,0.2) -- (2.8,1.6);
			\node[scale = 0.6, fill = white] at (2.8,0.8) {$(0,2,3)$};
		\end{tikzpicture}
		\begin{tikzpicture}[baseline = 0]
			\draw (0,0) -- (2,0)
			(0,0.05) -- (1.95,0.05)
			(0,-0.05) -- (1.95,-0.05)
			(2,0) + (-0.2,0.2) --+ (0,0) --+ (-0.2,-0.2);
			\node[scale = 0.8] at (1,0.3) {$(0,1,2,3)$};
		\end{tikzpicture}
		\begin{tikzpicture}[baseline = 30]
			\node (0) at (0,0) {$(0)$};
			\node (1) at (1,2) {$(1)$};
			\node (2) at (3,2) {$(2)$};
			\node (3) at (4,0) {$(3)$};
			
			\draw[->] (0.4,0) -- (3.6,0);
			\draw[->] (0.2,0.4) -- (0.8,1.6);
			\draw[->] (1.4,2) -- (2.6,2);
			\draw[->] (3.2,1.6) -- (3.8,0.4);
			
			\draw[->] (1.3,1.8) -- (3.7,0.2);
			
			\node[scale = 0.8] at (0.2,1.1) {$(0,1)$};
			\node[scale = 0.8] at (3.8,1.1) {$(2,3)$};
			\node[scale = 0.8] at (2,-0.2) {$(0,3)$};
			\node[scale = 0.8] at (2,2.2) {$(1,2)$};
			\node[scale = 0.8] at (2.2,0.8) {$(1,3)$};
			
			\draw[->, double] (2.8,1) -- (2.8,1.7);
			\node[scale = 0.6, fill = white] at (2.8,1.3) {$(1,2,3)$};
			
			\draw[->, double] (1.2,0.2) -- (1.2,1.6);
			\node[scale = 0.6, fill = white] at (1.2,0.8) {$(0,1,3)$};
		\end{tikzpicture}
		\]
		\caption{$\O_3$}\label{example3}
	\end{figure}
	
	\begin{remark}
	Recall that we may regard a small category as a single set $\C$ (of morphisms) equipped with source and target maps $s,t : \C \to \C$ and a composition map
	\[
	* : \bigl\{(f,g) \in \C \times \C : t(f) = s(g)\bigr\} \to \C
	\]
	satisfying suitable axioms; the objects of $\C$ are recovered as the fixed points of $s$ (or those of $t$, which coincide thanks to one of the axioms).
	Extending this view, we regard a small $\omega$-category as a set $\C$ equipped with $s_m,t_m : \C \to \C$ and
	\[
	*_m : \bigl\{(f,g) \in \C \times \C : t_m(f) = s_m(g)\bigr\} \to \C
	\]
	for each $m \ge 0$, satisfying suitable axioms.
	So an $m$-cell $a$ in $\C$ is a fixed point of $s_m$ (or equivalently of $t_m$), and the identity $(m+1)$-cell $\id_a$ is $a$ itself.
	See \cite[Definition 2.1]{Steiner:omega} (which is equivalent to \cite[\textsection 1]{Street:oriented} plus an extra finite-dimensionality axiom) for the precise definition.
	We write $\omegaCat$ for the category of small $\omega$-categories and $\omega$-functors.
	\end{remark}
	
	In principle, one may work with the original construction of $\O_n$ from \cite{Street:oriented}.
	That is, an $m$-cell $a$ in $\O_n$ is a suitably compatible (in the sense of \cite[p.313(2)]{Street:oriented}) family
	\[
	a = \begin{pmatrix}
		a^1_0 & \dots & a^1_{m-1} & a^1_m = a^0_m & \varnothing & \dots \\
		a^0_0 & \dots & a^0_{m-1} & a^1_m = a^0_m & \varnothing & \dots
	\end{pmatrix}
	\]
	where each $a^\epsilon_i$ is a \emph{well-formed} (in the sense of \cite[p.311(WF)]{Street:oriented}) set of injective maps $[i] \to [n]$ in $\Delta$.
	Here $a^1_i$ (respectively $a^0_i$) encodes the atomic decomposition of the $i$-source (respectively the $i$-target) of $a$, and the well-formedness is a combinatorial condition assuring that the elements of each $a^\epsilon_i$ indeed compose to a single $i$-cell.
	However, it is rather difficult to describe $\omega$-functors between them in this way.
	Instead, we now recall Steiner's description \cite{Steiner:orientals} of the full subcategory of $\omegaCat$ spanned by the orientals, which is much more tractable.
	
	Consider the free abelian group-enriched category $\ZZ\Delta$ on $\Delta$.
	More explicitly, $\ZZ\Delta$ has the same objects as $\Delta$ and its hom-abelian groups $\ZZ\Delta\bigl([m],[n]\bigr)$ are the free ones generated by the corresponding hom-sets of $\Delta$.
	The composition in $\ZZ\Delta$ is given by extending the one in $\Delta$ linearly in each variable.
	We will write $x_\alpha$ for the coefficient of $\alpha : [m] \to [n]$ in $x \in \ZZ\Delta\bigl([m],[n]\bigr)$ so that
	\[
	x = \sum_{\alpha : [m] \to [n]}x_\alpha\cdot\alpha.
	\]
	For given $x \in \ZZ\Delta\bigl([m],[n]\bigr)$, we define its \emph{support} by
	\[
	\supp(x) \defeq \bigl\{\alpha \in \Delta\bigl([m],[n]\bigr) : x_\alpha \neq 0\bigr\}.
	\]
	
	\begin{definition}\label{O definition}
		Let $\O(m,n)$ denote the subset of $\ZZ\Delta\bigl([m],[n]\bigr)$ consisting of those $x$ such that:
		\begin{itemize}
			\item[(O1)] $\sum_\alpha x_\alpha = 1$; and
			\item[(O2)] for any injective maps $\beta : [p] \to [m]$ and $\gamma : [p] \to [n]$ in $\Delta$, the coefficient $(x\beta)_\gamma$ is non-negative.
		\end{itemize}
	Here $x\beta$ denotes the composite in $\ZZ\Delta$ where $\beta$ is identified with its image under the canonical functor $\Delta \to \ZZ\Delta$.
	\end{definition}
	The ultimate (but unenlightening) justification for the rather mysterious condition (O2) is \cref{O} below.
	A more conceptual justification is given in \cref{rmk}.

\begin{theorem}[{\cite[Theorem 4.2]{Steiner:orientals}}]\label{O}
	The sets $\O(m,n)$ determine a subcategory $\O$ of (the underlying ordinary category of) $\ZZ\Delta$.
	Moreover, this category $\O$ is isomorphic to the full subcategory of $\omegaCat$ spanned by the orientals.
\end{theorem}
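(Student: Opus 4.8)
The plan is to route the statement through Steiner's correspondence between $\omega$-categories and augmented directed complexes. Recall from Steiner's theory \cite{Steiner:orientals} the adjunction $\lambda \dashv \nu$ with $\lambda \colon \ADC \to \omegaCat$, together with its key consequence: on the full subcategory of complexes admitting a loop-free, unital basis, $\lambda$ is full and faithful. I would first exhibit the oriental $\O_n$ as $\lambda D_n$, where $D_n$ is the augmented directed complex of the $n$-simplex: its underlying chain complex is the normalised simplicial chain complex of $\asimp{n}{}$, with $\ZZ$-basis in degree $p$ the injective maps $[p]\to[n]$, the alternating-face differential, and the standard augmentation $\varepsilon$; its positive cones are the submonoids $\NN\bigl\{\beta : \beta \text{ injective}\bigr\}$ generated by the basis. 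One checks that this basis is loop-free and unital, so $\lambda$ is full and faithful at each $D_n$. This reduces the theorem to producing a natural isomorphism $\O(m,n) \cong \ADC(D_m,D_n)$.

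To build the comparison, I would send $x = \sum_\alpha x_\alpha*\alpha \in \O(m,n)$ to $x_* \defeq \sum_\alpha x_\alpha \cdot \alpha_*$, where $\alpha_* \colon D_m \to D_n$ is the chain map induced by functoriality of normalised chains, so that a composite $\alpha\beta$ contributes $0$ whenever it fails to be injective. As a $\ZZ$-linear combination of chain maps, $x_*$ is automatically a chain map. Evaluating on a degree-$0$ basis element gives $\varepsilon \circ x_* = \bigl(\sum_\alpha x_\alpha\bigr)\varepsilon$, so $x_*$ respects the augmentation exactly when (O1) holds; and for an injective $\beta \colon [p]\to[m]$ one computes $x_*(\beta) = \sum_{\gamma} (x\beta)_\gamma\,\gamma$ with $\gamma$ ranging over injective maps $[p]\to[n]$, so $x_*$ carries positive cones into positive cones precisely when (O2) holds. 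Hence $x \mapsto x_*$ is a well-defined map $\O(m,n) \to \ADC(D_m,D_n)$, and it is functorial because composition in $\ZZ\Delta$ matches composition of induced chain maps; this is the chain-level counterpart of the stability of (O1)--(O2) recorded in the preceding proposition.

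It remains to see that $x \mapsto x_*$ is a bijection. Injectivity is the more routine half: $x_*$ records exactly the matrix coefficients $(x\beta)_\gamma$ for all injective $\beta,\gamma$, and taking $\beta = \id$ already recovers $x_\gamma$ for injective $\gamma$, while the remaining coefficients $x_\alpha$ for a non-injective $\alpha = \mu\sigma$ (epi-mono factorisation) are recovered by a Möbius inversion over the sections of the surjective part $\sigma$. \textbf{The main obstacle is surjectivity}: one must show that \emph{every} augmentation- and positivity-preserving chain map $f \colon D_m \to D_n$ has the form $x_*$. This is where the real content lies, since the assignment $\alpha \mapsto \alpha_*$ is far from surjective onto all chain maps $D_m \to D_n$ (these already outnumber $\ZZ\Delta([m],[n])$ in dimension $2$), so positivity and unitality must be used in an essential way. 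The argument I would follow Steiner's \cite{Steiner:orientals}: read off from $f$ in each degree the nonnegative coefficients $(f\beta)_\gamma$, and use loop-freeness of $D_n$, together with the same inversion as above, to assemble them into a genuine $x \in \ZZ\Delta([m],[n])$ with $f = x_*$; conditions (O1)--(O2) then hold for $x$ by construction. Combining the full faithfulness of $\lambda$ with the resulting natural isomorphism $\O(m,n)\cong\ADC(D_m,D_n)$ yields the asserted isomorphism between $\O$ and the full subcategory of $\omegaCat$ spanned by the orientals.
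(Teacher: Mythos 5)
Your proposal is correct and follows essentially the same route as the proof this paper relies on: the statement is quoted from Steiner, whose argument likewise passes through augmented directed complexes, combining full faithfulness of the ADC-to-$\omegaCat$ functor on complexes with loop-free unital bases (your $\lambda$ is what Steiner and this paper call $\nu$) with the coefficient bijection $\O(m,n)\cong\ADC\bigl(K[m],K[n]\bigr)$ --- Steiner's Theorems 2.6 and 4.1 respectively, exactly the two results invoked in the proof of \cref{marked}. Since the paper itself offers no proof beyond the citation, your deferral of the surjectivity step to Steiner's Theorem 4.1 is the only part left as a sketch, and you correctly identify it as the place where the real content lies.
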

\begin{remark}\label{intuition}
	Intuitively, the $\omega$-functor $\O_m \to \O_n$ corresponding to $x \in \O(m,n)$ is obtained by pasting the positive terms in $x$ (regarded as cells in $\O_n$) along the negative ones.
	For example,
	\[
	(0,1)-(1,1)+(1,2) \in \O(1,2)
	\]
	corresponds to the $\omega$-functor $\O_1 \to \O_2$ that picks out the composite of the $1$-cells $(0,1)$ and $(1,2)$ (\cref{Omn} left).
	
	Since we are mapping out of simplices rather than globes, there is some flexibility to the action of the $\omega$-functor on the ``boundary'' even after specifying its action on the ``interior''.
	This is controlled by positive, degenerate terms in $x \in \O(m,n)$.
	For example,
	\[
	(0,1,2)-(1,1,2)+(1,2,2) \in \O(2,2)
	\]
	corresponds to the unique $\omega$-functor $\O_2 \to \O_2$ that sends the atomic $1$-cells $(0,1)$, $(1,2)$, and $(0,2)$ to the composite $(0,1)-(1,1)+(1,2)$, the identity at $(2)$, and $(0,2)$ respectively  (\cref{Omn} middle).
	The non-degenerate term $(0,1,2)$ represents the identity $\omega$-functor on $\O_2$, but pasting it with $(1,2,2)$ induces the non-trivial action on the boundary.
	
	Positive degenerate terms can also represent whiskering.
	For example,
	\[
	(0,1,2)-(2,2,2)+(2,2,3) \in \O(2,3)
	\]
	represents the $3$rd face of $\O_3$ whiskered with the $1$-cell $(2,3)$  (\cref{Omn} right).
\end{remark}

\begin{figure}
	\[
	\begin{tikzpicture}
		\filldraw
		(0,0) circle [radius = 1pt]
		(2,0) circle [radius = 1pt];
		
		\draw[->] (0.1,0.1) -- (1,1) -- (1.9,0.1);
		
		\draw[->, palegray] (0.15,0) -- (1.85,0);
		\draw[->, double, palegray] (1,0.2) -- (1,0.6);
	\end{tikzpicture}
	\quad\quad\quad
	\begin{tikzpicture}
		\filldraw
		(0,0) circle [radius = 1pt]
		(2,0) circle [radius = 1pt]
		(1.7,0.3) circle [radius = 1pt];
		
		\draw[->] (0.1,0.1) -- (1,1) -- (1.6,0.4);
		\draw[->] (0.15,0) -- (1.85,0);
		\draw[double] (1.8,0.2) -- (1.9,0.1);
		\draw[->, double] (1,0.2) -- (1,0.6);
	\end{tikzpicture}
	\quad\quad\quad
	\begin{tikzpicture}
		\filldraw
		(0,0) circle [radius = 1pt]
		(0.5,1) circle [radius = 1pt]
		(2,0) circle [radius = 1pt];
		
		\draw[->, palegray] (0.15,0) -- (1.85,0);
		\draw[->] (0.05,0.1) -- (0.45,0.9);
		\draw[->] (0.6,1) -- (1.55,1) -- (2,0.1);
		\draw[->] (0.15,0.1) -- (1.5,0.95) -- (1.95,0.05);
		
		\draw[->, double] (0.65,0.55) -- (0.65,0.85);
		\draw[->, double, palegray] (1.35,0.2) -- (1.35,0.6);
	\end{tikzpicture}
	\]
	\caption{$\omega$-functors between orientals}\label{Omn}
\end{figure}

\begin{remark}\label{rmk}	
	We now give a justification for the conditions (O1) and (O2).
	According to \cref{intuition}, the sum $\sum_\alpha x_\alpha$ is the difference between:
	\begin{itemize}
		\item the number of (generating) cells we are pasting; and
		\item the number of cells \emph{along which} we are pasting the above cells.
	\end{itemize}
	Since we are pasting (topologically) contractible cells and we wish to obtain a contractible cell, this difference must be $1$.
	If it is greater, the result would be disconnected; if it is smaller, we must paste something to itself at some point, resulting in a non-trivial loop.
	This is our intuition behind (O1).
	
	For (O2), let us first consider the special case where $\beta$ is the identity at $[m]$.
	Then (O2) requires all negative terms in $x$ be degenerate.
	According to \cref{intuition}, this is just the reasonable statement that we only paste $m$-cells along cells of dimension strictly smaller than $m$.
	The general case (with arbitrary injective $\beta$) asks the same condition to hold not just for $x$ but also for its faces.
\end{remark}

The canonical functor $\Delta \to \ZZ\Delta$ can be easily checked to factor through $\O$, and the composite $\Delta \to \O \incl \omegaCat$ is precisely the cosimplicial object Street used to define the nerve functor.
(See \cref{subsection nerve} below.)
In particular, we obtain the simplicial nerve of $\O_n$ as follows.

\begin{definition}
	We will write $\O(-,n)$ for the simplicial set whose $m$-simplices are precisely the elements of $\O(m,n)$.
	The simplicial action is defined by restricting the obvious representable $\O\op \to \Set$ along $\Delta\op \to \O\op$.
	The action of each $\beta : [p] \to [m]$ is thus given by linearly extending the precomposition
	\[
	\Delta\bigl([m],[n]\bigr) \to \Delta\bigl([p],[n]\bigr) : \alpha \mapsto \alpha\beta.
	\]
\end{definition}

Note that $x \in \O(m,n)$ is degenerate at some $k$ if and only if each $\alpha \in \supp(x)$ is degenerate at $k$.

\begin{remark}
	It is easy to deduce, either directly from the definition of $\O$ or from \cite[Proof of Proposition 5.7]{Steiner:orientals}, that we have
	\[
	\O(0,n) = \bigl\{(i) : i \in [n]\bigr\}
	\]
	for any $n \ge 0$.
	This provides an obvious bijection $[n] \cong \O(0,n)$, and we will often identify the two sets accordingly.
	In particular, given $x \in \O(m,n)$ and $i \in [m]$, we will treat $x(i)$ (the image of $x$ under the action of $(i):[0] \to [m]$) as if it were a natural number.
\end{remark}

\begin{proposition}[{\cite[Proposition 5.7]{Steiner:orientals}}]\label{terminus}
Let $x \in \O(m,n)$.
Then we have
\[
x(0) = \min\{\alpha(0) : \alpha \in \supp(x)\}
\]
and
\[
x(m) = \max\{\alpha(m) : \alpha \in \supp(x)\}.
\]
\end{proposition}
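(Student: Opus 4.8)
The plan is to treat the two assertions as mirror images. The order-reversing involution of $\Delta$ induces an automorphism of $\ZZ\Delta$ that permutes the generators and carries injective operators to injective ones, hence preserves both (O1) and (O2) and restricts to an automorphism of $\O$; since it interchanges $(0)$ with $(m)$ and $\min$ with $\max$, it suffices to prove the statement about $x(0)$ and read off the one about $x(m)$ by symmetry.

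For $x(0)$, I would first observe that, as $(0) \in \O(0,m)$ and $\O$ is a subcategory of $\ZZ\Delta$, the element $x(0) = x\,(0)$ lies in $\O(0,n) = \{(i) : i \in [n]\}$ and is therefore a single vertex $(a)$. Writing $x(0) = \sum_{v} c_v * (v)$ with $c_v = \sum_{\alpha(0) = v} x_\alpha$, condition (O2) with $\beta = (0)$ gives $c_v \ge 0$ for all $v$, (O1) gives $\sum_v c_v = 1$, and since $x(0)$ is a single vertex this forces $c_a = 1$ and $c_v = 0$ for $v \ne a$. Set $i_0 = \min\{\alpha(0) : \alpha \in \supp(x)\}$. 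The inequality $a \ge i_0$ is immediate, since $c_a = 1 \ne 0$ produces some $\alpha \in \supp(x)$ with $\alpha(0) = a \ge i_0$. Everything thus reduces to the reverse inequality, i.e. to showing that the (already non-negative) coefficient $c_{i_0}$ is non-zero; then $c_{i_0} = 1$ and $a = i_0$.

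The crux is therefore to rule out the total cancellation $c_{i_0} = 0$. Suppose it held. Then the ``minimal part'' $z = \sum_{\alpha(0) = i_0} x_\alpha * \alpha$ is non-zero (as $i_0$ is attained) yet has coefficients summing to $0$, so $z$ carries a strictly negative coefficient; by the remark following \cref{O} (that is, (O2) with $\beta = \id$) every negatively weighted operator is degenerate. I would extract a contradiction from (O2) by localising such a negative term: given $\alpha$ with $\alpha(0) = i_0$, factor $\alpha = \bar\alpha\,\eta$ with $\eta$ surjective and $\bar\alpha$ the injective inclusion of its image, and precompose with the section $\beta$ of $\eta$ sending each index to the top of its fibre. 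Then $\alpha\beta = \bar\alpha$ is injective, and minimality of $i_0$ forces any competitor $\alpha' \in \supp(x)$ with $\alpha'\beta = \bar\alpha$ to equal $i_0$ throughout the initial fibre of $\eta$, because $i_0 \le \alpha'(0) \le \alpha'(\beta(0)) = i_0$. When $\alpha$ is degenerate only over its minimal vertex this pins the competitor to $\alpha$ itself, so $(x\beta)_{\bar\alpha} = x_\alpha \ge 0$ by (O2), contradicting $x_\alpha < 0$. Equivalently, I would package this as an induction on $m$: for $k \ge 1$ the face $\face{k}$ fixes the initial vertex, so $(x\face{k})(0) = x(0)$, and by the inductive hypothesis applied to $x\face{k} \in \O(m-1,n)$ this equals $\min\{\alpha'(0) : \alpha' \in \supp(x\face{k})\}$; it then suffices to show that some minimal simplex survives some such face, i.e. that $z$ does not lie in the kernel of $\alpha \mapsto (\alpha\face{1}, \dots, \alpha\face{m})$ — and a non-zero kernel element must carry a negative weight that the localisation above forbids.

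I expect this exclusion of $c_{i_0} = 0$ to be the only genuine obstacle. The delicate point is that a negatively weighted minimal simplex may be degenerate \emph{away} from its minimal vertex, so that a single injective restriction does not pin its coefficient; the argument must then choose the operator, and its section, extremally so that minimality of $i_0$ really does force the competitor to be unique. The rest — reduction to a single vertex, non-negativity of the $c_v$, the easy inequality, and the passage to $x(m)$ by symmetry — is routine.
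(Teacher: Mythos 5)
The paper offers no proof of \cref{terminus} at all --- it is quoted directly from Steiner --- so there is nothing internal to compare against; your attempt has to stand on its own, and it does not. Your preliminary reductions are sound: the order-reversing involution does restrict to an automorphism of $\O$ and swaps the two statements; $x(0)$ lies in $\O(0,n)$ and is hence a single vertex $(a)$; and $a \ge i_0 = \min\{\alpha(0) : \alpha \in \supp(x)\}$ is immediate. So the entire content is excluding the cancellation $c_{i_0} = \sum_{\alpha(0)=i_0} x_\alpha = 0$, and exactly there sits the gap you yourself flag. Your contradiction uses only the \emph{existence} of a negatively weighted operator $\alpha$ with $\alpha(0)=i_0$, localised by a single application of (O2). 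This cannot work, because such operators occur in perfectly legitimate elements of $\O(m,n)$: the paper's remark following \cref{decomposition} exhibits $x = (0,1,2)-(0,2,2)+(0,2,3) \in \O(2,3)$, in which every operator has initial value $i_0 = 0$ and $(0,2,2)$ carries coefficient $-1$ while being degenerate away from its initial vertex. For this $x$ your recipe gives $\eta = \degen{1}$, $\beta = (0,2)$, $\bar\alpha = (0,2)$, and $x\beta = (0,2)-(0,2)+(0,3) = (0,3)$, so $(x\beta)_{\bar\alpha} = 0$: the competitor $(0,1,2)$ cancels the $-1$. Indeed one checks that \emph{no} injective $\beta,\gamma$ isolates the coefficient of $(0,2,2)$ here, so no cleverer choice of section can rescue this form of the argument; and your inductive repackaging inherits the same hole, since, as you say, it reduces to the same localisation.

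What your method genuinely proves is weaker. Along your section $\beta$, the competitors of $\alpha$ are precisely the elements of $\supp(x)$ agreeing with $\alpha$ at the top of each fibre of $\eta$; all of these automatically have initial value $i_0$ and are pointwise $\le \alpha$. Hence (O2) pins the coefficient only when $\alpha$ is pointwise-minimal among the operators in $\supp(x)$ with initial value $i_0$, showing that such pointwise-minimal operators have $x_\alpha \ge 0$ --- but a negative coefficient at a non-minimal operator (as in the example above) is untouched by any single application of (O2). A correct proof must use the full hypothesis $c_{i_0}=0$ globally, for instance by an argument over the whole poset of operators with initial value $i_0$, rather than merely extracting one negative term from it; that idea is what is missing, and it is the actual mathematical content of Steiner's proposition.
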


\subsection{The operations $\paste k$ and $\filler k$}\label{subsection pasting}
Now we recall two families of operations on the simplicial set $\ZZ\Delta\bigl(-,[n]\bigr)$ from \cite{Steiner:orientals} which may be interpreted as a sort of \emph{pasting} and its \emph{witness} when restricted to $\O(-,n)$.

We denote the $i$-th elementary face (respectively degeneracy) operator by $\face i$ (respectively $\degen i$).

\begin{definition}\label{filler definition}
	Let $x,y \in \ZZ\Delta\bigl([m],[n]\bigr)$ with $m \ge 1$, and let $1 \le k \le m$.
	Suppose that $x\face{k-1} = y\face{k}$ holds.
	Then we define
	\begin{align*}
		x \filler k y &\defeq  x\degen{k}-x\face{k-1}\degen{k-1}^2+y\degen{k-1},\\
		x \paste k y & \defeq x-x\face{k-1}\degen{k-1}+y
	\end{align*}
	or equivalently,
	\begin{align*}
	x \filler k y &\defeq  x\degen{k}-y\face{k}\degen{k-1}^2+y\degen{k-1},\\
	x \paste k y &\defeq x-y\face{k}\degen{k-1}+y.
	\end{align*}
\end{definition}

\begin{remark}
	The assumption $x\face{k-1} = y\face k$ in \cref{filler definition} should be interpreted as a composability condition.
	Then according to \cref{intuition}, if $x,y \in \O(m,n)$ then $x \paste k y$ is precisely the pasting of $x$ and $y$ along their common face.
	The simplex $x \filler k y$ is to be thought of as a witness for this pasting (see \cref{filler basic} below).
	See also \cite[Remark 6.4]{Steiner:universal} for how $\paste k$ can indeed be seen as a composition in a suitable category.
\end{remark}

\begin{remark}
	The operations we are denoting by $\paste k$ and $\filler k$ are what would be called $\vee_{k-1}$ and $\triangledown_{k-1}$ respectively in \cite{Steiner:orientals}.
	We have shifted the index because, while working with the combinatorics of $\O(-,n)$, we found it much less confusing for the $k$-th face (rather than the $(k+1)$-st one) of an object labelled with $k$ to play a special role.
	We apologise to the reader if they are already familiar with Steiner's work and find our notation confusing.
	In Steiner's later papers such as \cite{Steiner:universal}, the symbol $\triangledown$ is replaced by $\wedge$ and the operations corresponding to $\paste k$ go unnamed.
\end{remark}

The following proposition is a direct consequence of \cref{filler definition}.

\begin{proposition}\label{filler basic}
	For any $x,y,k$ as in \cref{filler definition}, we have
	\[
	(x \filler k y)\face{k-1} = y, \quad (x \filler k y)\face{k} = x \paste k y, \quadand (x \filler k y)\face{k+1} = x.
	\]
\end{proposition}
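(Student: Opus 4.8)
The plan is to verify each of the three equalities by direct computation: expand $x \filler k y$ according to \cref{filler definition}, apply the operator $\face{j}$ for the relevant $j \in \{k-1, k, k+1\}$ to each of its three summands separately, and simplify using the associativity rule $(x\alpha)\beta = x(\alpha\beta)$ together with the standard (co)simplicial identities relating the $\face{i}$ and $\degen{j}$. Since every summand of $x \filler k y$ ends in either $\degen{k}$ or $\degen{k-1}$, each composite operator $\degen{k}\face{j}$ or $\degen{k-1}\face{j}$ that arises reduces, via these identities, to an identity operator or to $\face{k-1}\degen{k-1}$ (resp.\ $\face{k}\degen{k-1}$), so nothing is left in a complicated form.

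Concretely, I would first record the identities I expect to use: $\degen{k}\face{k} = \degen{k}\face{k+1} = \id$ and $\degen{k-1}\face{k-1} = \degen{k-1}\face{k} = \id$ (a degeneracy absorbs an adjacent face), $\degen{k}\face{k-1} = \face{k-1}\degen{k-1}$, and $\degen{k-1}\face{k+1} = \face{k}\degen{k-1}$. For the $\face{k-1}$-face, the first summand gives $x\degen{k}\face{k-1} = x\face{k-1}\degen{k-1}$, the middle summand collapses to the same $x\face{k-1}\degen{k-1}$ (its inner $\degen{k-1}\face{k-1}$ cancels), and the last gives $y\degen{k-1}\face{k-1} = y$; the first two then cancel, leaving $y$. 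For the $\face{k}$-face, the outer summands become $x$ and $y$ (using $\degen{k}\face{k} = \id$ and $\degen{k-1}\face{k} = \id$) while the middle one again collapses to $x\face{k-1}\degen{k-1}$, so the total is $x - x\face{k-1}\degen{k-1} + y$, which is $x \paste k y$ by definition. For the $\face{k+1}$-face, the first summand is $x$, the middle collapses to $x\face{k-1}\degen{k-1}$, and the last becomes $y\face{k}\degen{k-1}$; here the composability hypothesis $x\face{k-1} = y\face{k}$ is invoked to identify the last two terms, which cancel to leave $x$.

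It is worth noting that the composability assumption enters only in the computation of the top face; the other two equalities hold for arbitrary $x$ and $y$. I do not anticipate any genuine obstacle, since the whole argument is a bookkeeping exercise in the simplicial identities. The one place demanding care is the shifted indexing convention flagged in the remark above: it makes it easy to misapply the boundary identities (for instance, to confuse which of $\degen{k}\face{k}$ and $\degen{k}\face{k+1}$ equals the identity), and recording the four needed identities explicitly at the outset, as above, is what removes this risk.
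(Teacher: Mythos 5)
Your proof is correct and takes essentially the same route as the paper, which simply asserts the proposition as a direct consequence of \cref{filler definition}: you have written out exactly that omitted computation, expanding each face of $x \filler k y$ via the four cosimplicial identities you record (all of which are stated correctly under the paper's shifted indexing) and cancelling terms. Your observation that the composability hypothesis $x\face{k-1}=y\face{k}$ is needed only for the $\face{k+1}$ face is also accurate.
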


\begin{proposition}[{\cite[Proposition 5.4]{Steiner:orientals}}]\label{O pasting}
	Let $x,y \in \O(m,n)$ with $m \ge 1$, and let $1 \le k \le m$.
	Suppose $x\face{k-1} = y\face{k}$.
	Then we have
	\[
	x \filler k y \in \O(m+1,n) \quadand x \paste k y \in \O(m,n).
	\]
\end{proposition}

\begin{proposition}\label{filler}
	An element $x \in \ZZ\Delta\bigl([m],[n]\bigr)$ satisfies $x = (x\face{k+1}) \filler k (x\face{k-1})$ if and only if each simplicial operator in $\supp(x)$ is degenerate at either $k-1$ or $k$.
\end{proposition}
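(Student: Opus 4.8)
The plan is to unwind \cref{filler definition} and turn the asserted identity into a statement about linear independence in $\ZZ\Delta\bigl([m],[n]\bigr)$. Setting $a \defeq x\face{k+1}$ and $b \defeq x\face{k-1}$ (these are composable in the sense required by \cref{filler definition}, since $\face{k+1}\face{k-1} = \face{k-1}\face{k}$), I would expand
\[
(x\face{k+1}) \filler k (x\face{k-1}) = a\degen{k} - b\face{k}\degen{k-1}^2 + b\degen{k-1}
\]
and rewrite each summand as $x$ precomposed with an operator $[m] \to [m]$. A direct calculation with the simplicial identities identifies these three operators as the idempotents $e$, $h$, $f$ that collapse $\{k,k+1\}$, $\{k-1,k,k+1\}$ and $\{k-1,k\}$ respectively onto $k$ while fixing every other element. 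The identity to be proved thus becomes $x = xe - xh + xf$, equivalently $\sum_\alpha x_\alpha\bigl(\alpha - \alpha e + \alpha h - \alpha f\bigr) = 0$.

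The crucial structural point is that $e$, $h$, $f$ each factor through a degeneracy, so for every $\alpha$ the operators $\alpha e$, $\alpha f$ and $\alpha h$ are degenerate at $k$, at $k-1$, and at both respectively; by contrast $\alpha$ itself fails to be degenerate at either $k-1$ or $k$ exactly when $\alpha(k-1) < \alpha(k) < \alpha(k+1)$. For the ``if'' direction I would then argue summand by summand: if $\alpha$ is degenerate at $k$ one checks directly that $\alpha e = \alpha$ and $\alpha h = \alpha f$, so the summand $\alpha - \alpha e + \alpha h - \alpha f$ vanishes, and the case ``degenerate at $k-1$'' is the mirror image, giving $\alpha f = \alpha$ and $\alpha h = \alpha e$. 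Hence if every $\alpha \in \supp(x)$ is degenerate at $k-1$ or $k$, the whole sum collapses to zero.

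For the ``only if'' direction I would use that $\ZZ\Delta\bigl([m],[n]\bigr)$ is free on $\Delta\bigl([m],[n]\bigr)$ and splits as the direct sum of the subgroup spanned by operators degenerate at $k-1$ or $k$ and the subgroup spanned by those degenerate at neither. Since every $\alpha e$, $\alpha h$, $\alpha f$ lies in the first summand, projecting the relation $\sum_\alpha x_\alpha(\alpha - \alpha e + \alpha h - \alpha f) = 0$ onto the second summand leaves precisely the sum of the $x_\alpha\alpha$ over those $\alpha$ with $\alpha(k-1) < \alpha(k) < \alpha(k+1)$. By freeness this forces $x_\alpha = 0$ for all such $\alpha$, which is exactly the assertion that each operator in $\supp(x)$ is degenerate at $k-1$ or $k$.

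I expect the main obstacle to be the opening bookkeeping: composing the faces and degeneracies correctly to pin down $e$, $h$, $f$, and verifying the two cancellations $\alpha e = \alpha$, $\alpha h = \alpha f$ together with their mirror images. Once these operators are identified and their degeneracy recognised, the freeness of $\ZZ\Delta$ makes both implications immediate.
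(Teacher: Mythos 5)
Your proof is correct and takes essentially the same route as the paper's: expand $(x\face{k+1}) \filler k (x\face{k-1})$ via \cref{filler definition} into the three composite operators $\face{k+1}\degen{k}$, $\face{k+1}\face{k-1}\degen{k-1}^2$ and $\face{k-1}\degen{k-1}$, observe that these are all degenerate at $k$ or $k-1$ (giving the ``only if'' direction), and reduce the ``if'' direction by linearity to a per-operator identity. The paper dispatches these two steps as ``clear'' and ``straightforward to verify''; your idempotent bookkeeping with $e$, $h$, $f$ and the freeness/projection argument are precisely the details being elided there.
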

\begin{proof}
	The ``only if'' direction is clear.
	For the ``if'' direction, suppose that each simplicial operator in $\supp(x)$ is degenerate at either $k-1$ or $k$.
	Since we have
	\[
	(x\face{k+1})\filler k (x\face{k-1}) = x\face{k+1}\degen k-x\face{k+1}\face{k-1}\degen{k-1}^2+x\face{k-1}\degen{k-1},
	\]
	it suffices to check that
	\[
	\alpha = \alpha\face{k+1}\degen k-\alpha\face{k+1}\face{k-1}\degen{k-1}^2+\alpha\face{k-1}\degen{k-1}
	\]
	holds for any $\alpha : [m] \to [n]$ that is degenerate at either $k-1$ or $k$.
	The latter statement is straightforward to verify.
\end{proof}

The following proposition turns a sum ($x=y+z$) into a pasting ($x=u\paste{k}v$), and particular instances of this fact are used in \cite[Propositions 5.9-11]{Steiner:orientals}.
Its proof is a straightforward manipulation of simplicial operators.

\begin{proposition}\label{decomposition}
	Let $x,y,z \in \ZZ\Delta\bigl([m],[n]\bigr)$ be elements satisfying $x=y+z$, and let $1 \le k \le m$.
	Define
	\[
	u = y+z\face k \degen{k-1}, \quadand v = y\face{k-1}\degen{k-1} + z.
	\]
	Then we have
	\[
	u\face{k-1} = v \face{k}, \quad u \filler k v = y\degen{k} + z\degen{k-1}, \quadand u \paste k v = x.
	\]
\end{proposition}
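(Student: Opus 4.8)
The plan is to verify the three assertions by directly substituting \cref{filler definition} and simplifying with the elementary (co)simplicial identities, since every term in sight is an explicit combination of $y$ and $z$ decorated by faces and degeneracies. Before anything else I would establish the composability equation $u\face{k-1} = v\face{k}$, both because it is the first claim and because it is what licenses the use of either of the two equivalent formulas in \cref{filler definition} for $u \filler k v$ and $u \paste k v$. It is convenient to compute the common value once and reuse it.

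For this first step I expand $u\face{k-1} = y\face{k-1} + z\face{k}\degen{k-1}\face{k-1}$ and $v\face{k} = y\face{k-1}\degen{k-1}\face{k} + z\face{k}$. The two unit identities $\degen{k-1}\face{k-1} = \id = \degen{k-1}\face{k}$ collapse the degenerate middle factors, so that both expressions reduce to the same element
\[
u\face{k-1} = v\face{k} = y\face{k-1} + z\face{k}.
\]
I would record this common value, as it reappears immediately in the filler computation.

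For $u \filler k v = u\degen{k} - u\face{k-1}\degen{k-1}^2 + v\degen{k-1}$ I substitute the definitions of $u$ and $v$ together with the value of $u\face{k-1}$ just obtained. Two cancellations then occur: the terms $\pm\, y\face{k-1}\degen{k-1}^2$ produced by $u\face{k-1}\degen{k-1}^2$ and $v\degen{k-1}$ cancel outright, while the pair $z\face{k}\degen{k-1}\degen{k}$ and $z\face{k}\degen{k-1}^2$ cancels after invoking the degeneracy identity $\degen{k-1}\degen{k} = \degen{k-1}\degen{k-1}$. What survives is precisely $y\degen{k} + z\degen{k-1}$. The pasting claim is even simpler: expanding $u \paste k v = u - u\face{k-1}\degen{k-1} + v$ and again substituting the value of $u\face{k-1}$, the terms $\pm\, z\face{k}\degen{k-1}$ and $\pm\, y\face{k-1}\degen{k-1}$ cancel in pairs, leaving $y + z = x$.

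There is no genuine obstacle here; the content is entirely bookkeeping, in line with the claim that the proof is a straightforward manipulation of simplicial operators. The only point demanding care is tracking the identities under the convention $(x\alpha)\beta = x(\alpha\beta)$, in which operators compose in the order written: the relevant facts are the unit identities $\degen{j}\face{j} = \id = \degen{j}\face{j+1}$ and the degeneracy identity, each applied with $j = k-1$. I would state these once at the outset so that the three verifications read cleanly.
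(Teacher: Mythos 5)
Your proposal is correct and is exactly the argument the paper has in mind: the paper omits the details, remarking only that the proof ``is a straightforward manipulation of simplicial operators,'' and your verification supplies precisely those manipulations, with the identities $\degen{k-1}\face{k-1} = \id = \degen{k-1}\face{k}$ and $\degen{k-1}\degen{k} = \degen{k-1}\degen{k-1}$ applied correctly under the paper's composition convention. In particular, establishing $u\face{k-1} = v\face{k} = y\face{k-1} + z\face{k}$ first, so that $u \filler k v$ and $u \paste k v$ are even defined, is the right order of business, and all three computations check out.
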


\begin{remark}
	Observe that most of the operations we consider only preserve the support in a weak sense.
	For instance, we have
	\begin{align*}
		\supp(x \face k) &\subset \bigl\{\alpha\face k : \alpha \in \supp(x)\bigr\},\\
		\supp(x \paste k y) &\subset \supp(x) \cup \bigl\{\alpha\face{k-1}\degen{k-1} : \alpha \in \supp(x)\bigr\} \cup \supp(y)
	\end{align*}
	for $x,y \in \ZZ\Delta\bigl([m],[n]\bigr)$ whenever $x\face k$ or $x \paste k y$ makes sense.
	In general, one cannot replace the subset symbol by an equality; one has, for example,
	\[
	\bigl((0,1)-(1,1)+(1,2)\bigr)\face 0 = (2)
	\]
	in $\O(-,2)$ and
	\[
	\bigl((0,0,2)-(0,2,2)+(0,2,3)\bigr) \paste 1 \bigl((0,1,2)-(2,2,2)+(2,2,3)\bigr) = (0,1,2)-(0,2,2)+(0,2,3)
	\]
	in $\O(-,3)$.
	Nevertheless, the support \emph{is} strictly preserved in certain nice cases, and this will be crucial in our arguments below.
\end{remark}

\subsection{Complicial sets}\label{subsection nerve}
The cosimplicial object $\Delta \to \omegaCat : [n] \mapsto \O_n$ as described in the previous subsection induces a nerve functor $\omegaCat \to \sSet$.
However, this functor is not full; for instance, the standard $2$-simplex is the nerve of the obvious $1$-category $[2]$, and we can consider the (unit) simplicial map $\Delta[2] \to \O(-,2)$ picking out the simplex $(0,1,2)$.
This map does not come from an $\omega$-functor $[2] \to \O_2$ since we are sending the commutative triangle in $[2]$ to a non-commutative one in $\O_2$.

To rectify this, Roberts proposed considering simplicial sets with distinguished simplices (to be thought of as ``abstract commutative/identity simplices'').
The adjective to refer to these distinguished simplices has changed multiple times from Roberts' original \emph{neutral}, to \emph{hollow}, to \emph{thin}, and then to \emph{marked}.

\begin{definition}
	A \emph{marked simplicial set} $(X,tX)$ is a simplicial set $X$ together with subsets $tX_n \subset X_n$ of \emph{marked} simplices for $n \ge 1$ containing all degenerate simplices.
	A \emph{morphism} of marked simplicial sets $f : (X,tX) \to (Y,tY)$ is a simplicial map $f : X \to Y$ that preserves marked simplices.
	We denote the category of marked simplicial sets by $\msSet$.
\end{definition}

We will often suppress $tX$ and simply speak of a marked simplicial set $X$.

\begin{remark}
	The reader is warned that this notion of marked simplicial set is different from Lurie's \cite[\textsection 3.1]{HTT} where marked simplices are allowed only in dimension $1$.
\end{remark}

Now we can give the precise definition of Street's nerve functor.

\begin{definition}
	The \emph{complicial nerve} of an $\omega$-category $\C$ is the marked simplicial set
	\[
	\omegaCat(\O_{(-)},\C)
	\]
	in which an $n$-simplex $F : \O_n \to \C$ is marked if and only if it sends the (unique) atomic $n$-cell in $\O_n$ to an $(n-1)$-cell in $\C$. 
\end{definition}

With this definition, Street showed in \cite{Street:fillers} that the nerve of any $\omega$-category is a \emph{complicial set} in the sense we recall now.
(In fact, Street proved something stronger; see \cite{Street:fillers} for the precise statement.)

\begin{definition}
	We say a morphism in $\msSet$ is \emph{regular} if it reflects marked simplices.
	In other words, $f : (X,tX) \to (Y,tY)$ is regular if $f(x) \in tY_n$ implies $x \in tX_n$.
	By a \emph{regular subset} $(A,tA)$ of $(X,tX)$, we mean a simplicial subset $A \subset X$ equipped with the marking $tA_n = A_n \cap tX_n$.
\end{definition}

\begin{definition}\label{anodyne}
	For each $n \ge 0$, we will regard the standard $n$-simplex $\Delta[n]$ as a marked simplicial set equipped with the minimal marking (\emph{i.e.}\;only the degenerate simplices are marked).
	Also, for each $0 < k < n$, we write:
	\begin{itemize}
		\item $\asimp n k$ for the object obtained from $\Delta[n]$ by further marking those simplices $\alpha : [m] \to [n]$ with $\{k-1, k, k+1\} \subset \im\alpha$;
		\item $\horn n k$ for the regular subset of $\asimp n k$ whose underlying simplicial set is the $k$-th horn;
		\item $\asimpd n k$ for the object obtained from $\asimp n k$ by further marking $\face{k-1}$ and $\face{k+1}$; and
		\item $\asimpdd n k$ for the object obtained from $\asimpd n k$ by further marking $\face{k}$.
	\end{itemize}
	The class of \emph{inner complicial anodyne extensions} is the closure of the union
	\[
	\bigl\{\horn n k \incl \asimp n k : 0 < k < n\bigr\} \cup \bigl\{\asimpd n k \incl \asimpdd n k : 0 < k < n\bigr\}
	\]
	under coproducts, pushouts along arbitrary maps, and transfinite compositions.
	(We are not closing this class under retracts.)
\end{definition}

\begin{definition}\label{complicial definition}
	A (\emph{strict}) \emph{complicial set} is a marked simplicial set $X$ such that:
	\begin{itemize}
		\item $X$ has the unique right lifting property with respect to $\horn n k \incl \asimp n k$ for all $0<k<n$;
		\item $X$ has the unique right lifting property with respect to $\asimpd n k \incl \asimpdd n k$ for all $0<k<n$; and
		\item the marked $1$-simplices in $X$ are precisely the degenerate ones.
	\end{itemize}
\end{definition}

Now we can state what was conjectured by Roberts, made precise by Street, and proven by Verity.

\begin{theorem}[{\cite[Theorem 266]{Verity:strict}}]\label{Dom}
	The complicial nerve provides an equivalence between $\omegaCat$ and the full subcategory of $\msSet$ spanned by the complicial sets.
\end{theorem}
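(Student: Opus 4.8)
The plan is to show that the Street nerve $N \defeq \omegaCat(\O_{(-)},-) : \omegaCat \to \msSet$ is fully faithful and that its essential image is exactly the full subcategory of complicial sets. Since Street's theorem already guarantees that $N\C$ is a complicial set for every $\omega$-category $\C$, only these two properties remain. I would organise the whole argument around the adjunction $L \dashv N$, where $L : \msSet \to \omegaCat$ is the left Kan extension of the cosimplicial $\omega$-category $n \mapsto \O_n$ along the (marked) Yoneda embedding, a marked $n$-simplex in the domain being sent to the quotient of $\O_n$ that collapses the atomic $n$-cell to an identity. Fully faithfulness of $N$ is then equivalent to invertibility of the counit $LN\C \to \C$ for all $\C$, and essential surjectivity onto complicial sets amounts to invertibility of the unit $X \to NLX$ whenever $X$ is complicial. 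The bridge between the two sides is the observation, to be recorded in \cref{subsection complicial}, that composition and identities in an $\omega$-category are computed inside its nerve by the operations $\paste k$ of \cref{subsection pasting} and by marking.

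For fully faithfulness I would first exhibit every $\omega$-category as the canonical colimit of the orientals mapping into it, so that the counit $LN\C \to \C$ is an isomorphism: the atomic cells of the various $\O_n$ and their iterated pastings $\paste k$ generate $\C$ under the $\omega$-category operations, and \cref{O} ensures that the only relations among them are those already visible in $N\C$. Faithfulness is then immediate, since an $\omega$-functor is determined by its values on cells and each cell is the image of an atomic cell of some $\O_n$. For fullness, given a marked simplicial map $g : N\C \to N\mathcal{D}$, I would define an $\omega$-functor $F : \C \to \mathcal{D}$ cell by cell; the content is that $g$ automatically respects the algebraic structure, which follows because $g$ preserves the simplicial operations realising $\paste k$ and preserves marked simplices, i.e.\ identities.

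The substantial half is essential surjectivity: reconstructing $\C = LX$ from a complicial set $X$ and checking $NLX \cong X$. I would take the $n$-cells of $\C$ to be the $n$-simplices of $X$ (with globular source and target read off from iterated faces), define composition by filling the admissible horns $\horn n k \incl \asimp n k$, and declare the identities to be the marked simplices. Here the \emph{unique} right lifting properties of \cref{complicial definition} do all the work: uniqueness of horn fillers makes each composite well defined, the lifts against $\asimpd n k \incl \asimpdd n k$ supply the thin witnesses forcing associativity, unitality, and middle-four interchange in every dimension, and the requirement that the marked $1$-simplices be exactly the degenerate ones rules out spurious identities. Each axiom becomes a lifting problem with a unique solution, and the decompositions of \cref{decomposition} together with \cref{filler basic} let one rewrite any equation between pasted cells as a statement about a single simplex and its faces.

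The main obstacle, and the genuine heart of the theorem, is this last step: verifying that the lifting properties yield \emph{all} the $\omega$-category axioms coherently across all dimensions, not merely low-dimensional composition. The difficulty is the bookkeeping of the higher associativity and interchange relations and confirming that each is witnessed by a uniquely determined thin filler; this is the combinatorial core of Verity's argument, and it is precisely where the fine analysis of the simplicial operators underlying $\paste k$ and $\filler k$ is indispensable. Once $\C$ is built and its axioms checked, a final comparison — matching the simplices of $X$ with the $\omega$-functors $\O_n \to \C$ and the marking of $X$ with the identity-atomic-cell condition — gives $NLX \cong X$ and completes the equivalence.
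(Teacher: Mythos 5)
The first thing to say is that the paper does not prove this statement at all: it is imported verbatim from Verity \cite{Verity:strict} as background, so there is no internal proof to compare against, and the only sensible question is whether your argument would stand on its own as a proof of Verity's theorem. It would not. Your proposal is a proof outline rather than a proof: at the two decisive points it records what needs to be true instead of establishing it. For full fidelity, you assert that the counit $LN\C \to \C$ is invertible because the cells of $\C$ are generated by atomic cells of orientals subject only to relations ``already visible in $N\C$'' --- but that is a restatement of full fidelity, not an argument for it. For essential surjectivity, you propose to build an $\omega$-category from a complicial set $X$ by unique horn filling and then say that associativity, unitality and interchange ``become lifting problems with unique solutions''; however, an equation between two iterated composites only \emph{becomes} a lifting problem after one has exhibited, by explicit simplicial combinatorics, a simplex whose faces encode both sides of the equation, and producing such witnesses coherently in all dimensions is precisely the content of Verity's memoir (which runs through stratified simplicial sets, superstructures, and a decomposition theory for complicial sets). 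You concede this yourself when you call that step ``the combinatorial core of Verity's argument''; deferring the core is deferring the theorem.

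There is also an organisational wrinkle worth flagging. Having equipped the simplices of $X$ with an $\omega$-category structure by hand, you would still owe a comparison of that object with $LX$, since unit invertibility is a statement about the left Kan extension $L$; colimits in $\omegaCat$ are not computed objectwise, so $LX$ is not obviously the hand-built category. The cleaner route (and, in effect, Verity's) dispenses with $L$ entirely: one constructs a functor from complicial sets to $\omega$-categories directly and proves it inverse to the nerve up to natural isomorphism, which yields both halves of the equivalence at once. None of this says the skeleton is wrong --- reducing to full fidelity plus identification of the essential image, using Street's result that every nerve is complicial, is the right shape --- but as written the proposal consists of that skeleton together with an acknowledgement that the hard parts remain to be done.
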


\begin{remark}
	Since $k-1$ and $k+1$ have the same parity and $k$ has the opposite one, we see from the informal description of $\O_n$ that $\face{k-1}$ and $\face{k+1}$ lie on the same side (either source or target) of the interior $n$-cell, and $\face{k}$ lies on the other.
	Observe further that, aside from the degenerate ones, the marked simplices in $\asimp n k$ are precisely those that are not contained in $\face{k-1}$, $\face{k}$, or $\face{k+1}$.
	Thus the (marked) interior $n$-simplex of $\asimp n k$ may be thought of as asserting an equality between $\face{k}$ and the composite of $\face{k-1}$ and $\face{k+1}$.
	
	In this sense, lifting against $\horn n k \incl \asimp n k$ defines a sort of composition, and lifting against $\asimpd n k \incl \asimpdd n k$ ensures that any composite of identities is itself an identity.
\end{remark}

\begin{remark}
	Although the term \emph{complicial set} originally meant \cref{complicial definition}, today it is more commonly used to refer to the weak variant where:
	\begin{itemize}
		\item the unique right lifting property is replaced by a mere right lifting property;
		\item the lifting is required for $k=0$ and $k=n$ too; and
		\item the condition on marked $1$-simplices is dropped.
	\end{itemize}
	(The definitions of horns and simplices for $k=0$ and $k=n$ can be found at \cite[Notation 10]{Verity:I}.
	By \cite[Example 17]{Verity:I}, complicial sets in the sense of \cref{complicial definition} are complicial sets in this sense.)
	The homotopy theory of these \emph{weak complicial sets} is captured by a model structure on $\msSet$ due to Verity \cite[Theorem 100]{Verity:I}.
	In particular, the weak complicial sets are precisely the fibrant objects therein.
	Since the inner complicial anodyne extensions of \cref{anodyne} are examples of its trivial cofibrations, this model structure provides a suitable framework in which to interpret our main result (\cref{main}).
	Note, however, that this model structure will not play any mathematical role in this paper.
\end{remark}

\subsection{$\O(-,n)$ as complicial sets}\label{subsection complicial}

From now on, we will regard $\O(-,n)$ as the complicial nerve of $\O_n$ and in particular as a \emph{marked} simplicial set.
The following proposition states that the marked simplices in $\O(-,n)$ are precisely those that can be obtained by pasting degenerate simplices.

\begin{proposition}\label{marked}
	A simplex $x \in \O(m,n)$ is marked if and only if each simplicial operator in $\supp(x)$ is degenerate.
\end{proposition}
\begin{proof}
	We proceed by analysing explicitly the chain of bijections connecting the hom-set $\omegaCat(\O_m,\O_n)$ to $\O(m,n)$.
	
	Let $\ADC$ denote the category of \emph{augmented directed complexes}.
	That is, an object $K$ in $\ADC$ is an augmented chain complex
	\[
	\dots \xrightarrow{d} K_2 \xrightarrow{d} K_1 \xrightarrow{d} K_0 \xrightarrow{e} \ZZ
	\]
	together with distinguished submonoids $K_i^* \subset K_i$ (which do not necessarily satisfy $d(K_i^*) \subset K^*_{i-1}$), and a moprhism $f : K \to L$ is an augmentation-preserving chain map $f_i : K_i \to L_i$ such that $f_i(K_i^*) \subset L_i^*$.
	
	In \cite{Steiner:omega}, Steiner constructed a pair of adjoint functors $\lambda \dashv \nu$ between $\ADC$ and $\omegaCat$, whose right adjoint part $\nu : \ADC \to \omegaCat$ sends an augmented directed complex $K$ to the $\omega$-category $\nu K$ whose cells are families
	\[
	a = \begin{pmatrix}
		a_0^- & a_1^- & \dots & \\
		a_0^+ & a_1^+ & \dots & 
	\end{pmatrix}
	\]
	with $a_i^-,a_i^+ \in K_i^*$ (which are to be thought of as representing the atomic decompositions of the $i$-source/target of $a$) satisfying certain conditions.
	We refer the reader to \cite[Definitions 2.6 and 2.8]{Steiner:omega} for the precise construction, and just assert that such $a$ is a $k$-cell if and only if $a_i^- = a_i^+ = 0$ for all $i >k$.
	By \cite[Theorem 5.6]{Steiner:omega}, this functor $\nu$ is fully faithful when restricted to those augmented directed complexes which admit a \emph{strongly loop-free}, \emph{unital basis} (see \cite[Definitions 3.1, 3.4, and 3.6]{Steiner:omega}).
	
	For $m \ge 0$, let $K[m]$ denote the following augmented directed complex:
	\begin{itemize}
		\item $K[m]_i$ is the free abelian group on the set $\Deltainj\bigl([i],[m]\bigr)$ of injective maps $\beta : [i] \to [m]$ in $\Delta$;
		\item $K[m]^*_i$ is the submonoid (freely) generated by $\Deltainj\bigl([i],[m]\bigr)$;
		\item the differential $d$ is given by $d(\beta) = \sum_{0 \le j \le i}(-1)^j(\beta\delta_j)$; and
		\item the augmentation $e$ sends each $\beta \in \Deltainj\bigl([0],[m]\bigr)$ to $1$.
	\end{itemize}
	By \cite[Theorem 3.2]{Steiner:orientals}, the $\omega$-category $\nu K[m]$ is isomorphic to $\O_m$ .
	In particular, the unique atomic $m$-cell in $\O_m$ corresponds to the \emph{atom} $\langle \id_{[m]}\rangle$ associated to $\id_{[m]} \in \Deltainj\bigl([m],[m]\bigr)$ in the sense of \cite[Definition 3.2]{Steiner:omega}.
	This atom has the form
	\[
	\langle \id_{[m]}\rangle = \begin{pmatrix}
		a_0^- & a_1^- & \dots & a_{m-1}^- & \id_{[m]} & 0 & \dots\\
		a_0^+ & a_1^+ & \dots & a_{m-1}^+ & \id_{[m]} & 0 & \dots
	\end{pmatrix}.
	\]

	Now fix $m,n \ge 0$.
	Then since both $K[m]$ and $K[n]$ admit a strongly loop-free, unital basis by \cite[Theorem 3.1]{Steiner:orientals}, we have a chain of bijections
	\[
	\omegaCat(\O_m,\O_n) \cong \omegaCat\bigl(\nu K[m], \nu K[n]\bigr) \overset{\nu^{-1}}\cong \ADC\bigl(K[m],K[n]\bigr).
	\]
	It is now easy to check that an $\omega$-functor $\O_m \to \O_n$ sends the atomic $m$-cell in $\O_m$ to an $(m-1)$-cell in $\O_n$ if and only if the corresponding map $K[m] \to K[n]$ in $\ADC$ sends the basis element $\id_{[m]} \in K[m]_m$ to $0$.
	
	Finally, we recall how Steiner relates $\ADC\bigl(K[m],K[n]\bigr)$ to $\O(m,n)$.
	Given any $\alpha : [m] \to [n]$ in $\Delta$, consider the function $\Deltainj\bigl([i],[m]\bigr) \to K[n]_i$ given by
	\[
	\beta \mapsto \begin{cases}
		\alpha\beta & \text{if $\alpha\beta$ is injective,}\\
		0 & \text{otherwise.}
	\end{cases}
	\]
	Linearly extending this function yields a homomorphism $\alpha_i : K[m]_i \to K[n]_i$, and $(\alpha_i)_{i \ge 0}$ forms a chain map between the underlying chain complexes of $K[m]$ and $K[n]$.
	Moreover, by \cite[Theorem 4.1]{Steiner:orientals}, linearly extending the assignation $\alpha \mapsto (\alpha_i)_{i \ge 0}$ yields an isomorphism from $\ZZ\Delta\bigl([m],[n]\bigr)$ to the abelian group of chain maps $K[m] \to K[n]$.
	Now it is straightforward to check that, given $x \in \ZZ\Delta\bigl([m],[n]\bigr)$, the corresponding chain map $f : K[m] \to K[n]$ preserves the augmentation if and only if $x$ satisfies (O1), and also $f$ preserves the distinguished submonoids if and only if $x$ satisfies (O2).
	Thus we have the desired bijection
	\[
	\ADC\bigl(K[m],K[n]\bigr) \cong \O(m,n).
	\]
	The lemma now follows by observing that $f$ sends the basis element $\id_{[m]} \in K[m]_m$ to $0$ if and only if the corresponding $x$ is a linear combination of degenerate maps $[m] \to [n]$.
\end{proof}

With this characterisation of marked simplices, we can show the two notions of composition to be equivalent.

\begin{proposition}\label{filler complicial}
	Let $x,y \in \O(m,n)$ and suppose $x\face{k-1} = y\face{k}$.
	Then there exists a unique map $\asimp{m+1}{k} \to \O(-,n)$ that sends $\face{k-1}$ and $\face{k+1}$ to $y$ and $x$ respectively, namely the one that picks out $x \filler k y$.
\end{proposition}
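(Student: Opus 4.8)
The plan is to use the Yoneda lemma to reduce the statement to a claim about a single element of $\O(m+1,n)$, exhibit the filler explicitly for existence, and then reduce uniqueness to the degeneracy criterion of \cref{filler}. By Yoneda, a simplicial map $\Delta[m+1] \to \O(-,n)$ is the same datum as an element $z \in \O(m+1,n)$, and such a map underlies a morphism $\asimp{m+1}{k} \to \O(-,n)$ of marked simplicial sets precisely when $z\alpha$ is marked for every $\alpha : [p] \to [m+1]$ with $\{k-1,k,k+1\} \subseteq \im\alpha$ (the degenerate $\alpha$ impose nothing by \cref{marked}). Under this identification the prescribed-face conditions read $z\face{k-1} = y$ and $z\face{k+1} = x$.

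For existence I would take $z = x \filler k y$. It lies in $\O(m+1,n)$ by \cref{O pasting}, and \cref{filler basic} gives exactly $z\face{k-1} = y$ and $z\face{k+1} = x$. To see that it is marking-preserving, note first that \cref{filler} applies, since $z = (z\face{k+1}) \filler k (z\face{k-1})$, so every operator in $\supp(z)$ is degenerate at $k-1$ or at $k$. Now if $\alpha$ is marked, i.e. $\{k-1,k,k+1\} \subseteq \im\alpha$, then monotonicity forces a position at which $\alpha$ takes the consecutive values $k-1,k$ and one at which it takes $k,k+1$; precomposing with such an $\alpha$ therefore keeps every operator of $\supp(z)$ degenerate, whence $z\alpha$ is marked by \cref{marked}. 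This identifies the candidate map.

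For uniqueness, any map $\asimp{m+1}{k} \to \O(-,n)$ is determined by the image $z'$ of the top (identity) simplex, which is marked there; hence by \cref{marked} every operator in $\supp(z')$ is degenerate, and by hypothesis $z'\face{k-1} = y$, $z'\face{k+1} = x$. By \cref{filler} it suffices to upgrade ``degenerate'' to ``degenerate at $k-1$ or $k$'', for then $z' = (z'\face{k+1}) \filler k (z'\face{k-1}) = x \filler k y$. The main obstacle is precisely this upgrade: I expect to argue by contradiction, assuming some $\gamma_0 \in \supp(z')$ is strictly increasing on $\{k-1,k,k+1\}$, and to probe $z'$ with a suitable marked injection $\alpha$ (a section of the degeneracy of $\gamma_0$ whose image contains $k-1,k,k+1$) so that $\gamma_0\alpha$ is non-degenerate; condition (O2) then makes the coefficient of $\gamma_0\alpha$ in $z'\alpha$ non-negative, while markedness of $z'\alpha$ forces it to vanish. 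The delicate point, and the crux of the whole argument, is ruling out cancellation of this non-degenerate term against other operators of $\supp(z')$ sharing the same $\alpha$-image; this is exactly the sort of strict preservation of support in favourable cases anticipated in the remark above, which I would secure by an extremality choice of $\gamma_0$ together with (O2), using that the boundary data $x$ and $y$ are already pinned down.
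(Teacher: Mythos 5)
Your existence half is complete and is essentially the paper's own final verification: $x \filler k y$ lies in $\O(m+1,n)$ by \cref{O pasting}, has the prescribed faces by \cref{filler basic}, and preserves marking because \cref{filler} makes every operator in its support degenerate at $k-1$ or at $k$, so that precomposition with any $\alpha$ whose image contains $k-1,k,k+1$ yields only degenerate operators and \cref{marked} applies. The genuine gap is in uniqueness. Your reduction (via \cref{marked} and \cref{filler}) to the claim that every $\gamma \in \supp(z')$ is degenerate at $k-1$ or $k$ is sound, but the step that would prove this claim --- ruling out cancellation among the operators $\gamma$ with $\gamma\alpha = \gamma_0\alpha$ --- is only announced (``I expect to argue\dots'', ``I would secure by an extremality choice\dots''), and it is, as you say yourself, the crux. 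It is also genuinely delicate: the competitors $\gamma$ with $\gamma\alpha = \gamma_0\alpha$ need not be injective, so markedness of $z'$ alone does not discard them, and neither of the two ingredients you name does the job either ((O2) only gives non-negativity of the coefficient, which is perfectly compatible with its vanishing by cancellation, and the prescribed faces $x,y$ play no role here). A working completion requires choices your sketch never makes. One that works: observe that every competitor lying in $\supp(z')$ is itself strictly increasing on $\{k-1,k,k+1\}$, i.e.\ ``bad''; take $\alpha$ to select $k$, the \emph{last} element of each $\gamma_0$-fiber contained in $[0,k-1]$, and the \emph{first} element of each fiber contained in $[k+1,m+1]$ (this $\alpha$ contains $k-1,k,k+1$ and makes $\gamma_0\alpha$ injective), so that every competitor is pointwise $\le \gamma_0$ below $k$ and $\ge \gamma_0$ above $k$; and choose $\gamma_0$ \emph{minimizing} $\sum_{i<k}\gamma(i) - \sum_{i>k}\gamma(i)$ over all bad operators in $\supp(z')$. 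Then $\gamma_0$ is the only competitor in $\supp(z')$, the coefficient of the non-degenerate operator $\gamma_0\alpha$ in $z'\alpha$ equals $z'_{\gamma_0} \neq 0$, and markedness of $z'\alpha$ gives the contradiction. Without an argument of this kind, your uniqueness proof does not exist.

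It is worth noting that the paper avoids this combinatorics entirely: it shows that the inclusion into $\asimp{m+1}{k}$ of the minimal regular subset containing $\face{k-1}$ and $\face{k+1}$ is a complicial anodyne extension (by pairing off the missing simplices and filling horns), and then invokes the fact, recalled in \cref{subsection nerve}, that $\O(-,n)$ is a \emph{strict} complicial set, hence lifts \emph{uniquely} against such inclusions; existence and uniqueness arrive simultaneously, and $x \filler k y$ is merely checked afterwards to be the lift. Your route would trade that appeal to Street's theorem for hard, hands-on combinatorics --- a legitimate alternative in principle --- but the cancellation analysis is precisely where all of its difficulty sits, and it is the part you left out.
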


\begin{proof}
	Let $X \subset \asimp {m+1} {k}$ be the minimal regular subset containing $\face{k-1}$ and $\face{k+1}$.
	(Note that $X$ has no non-degenerate marked simplices.)
	We first show that the inclusion $X \incl \asimp {m+1} k$ is an inner complicial anodyne extension.
	
	To see this, note that the simplices $\alpha$ in $\asimp{m+1}k\setminus X$ are precisely those $\alpha : [p] \to [m+1]$ whose images contain both $k-1$ and $k+1$.
	Thus we may partition the set of non-degenerate simplices in $\asimp{m+1}k\setminus X$ into pairs of the form $\{\alpha,\alpha\face i\}$ where $\alpha : [p] \to [m+1]$ satisfies $\alpha(i)=k$ (and necessarily $\alpha(i-1) = k-1$ and $\alpha(i+1) = k+1$).
	Moreover, it is straightforward to see that $\alpha$ defines a regular map of marked simplicial sets $\alpha : \asimp p i \to \asimp{m+1}k$.
	It follows that the inclusion $X \incl \asimp{m+1}k$ may be obtained by filling $\horn p i$ for each such $\alpha$ in increasing order of $p$.
	This exhibits the inclusion as an inner complicial anodyne extension.
	
	By assumption, the simplices $x,y$ specify a map $X \to \O(-,n)$.
	Since $\O(-,n)$ is a complicial set, this map extends uniquely to $\asimp{m+1}k$ by the above argument.
	It now remains to check that the map picking out $x \filler k y$ is indeed such an extension.
	
	Since we have $x \filler k y \in \O(m+1,n)$ by \cref{O pasting}, at least $x \filler k y$ specifies a well-defined simplicial map $\Delta[m+1] \to \O(-,n)$, which moreover satisfies $(x \filler k y)\face{k-1}=y$ and $(x \filler k y)\face{k+1}=x$ by \cref{filler basic}.
	To see that this map respects the marking, let $\beta$ be a non-degenerate, marked $p$-simplex in $\asimp{m+1}k$.
	Then there exists $i \in [p]$ such that $\beta(i-1) = k-1$, $\beta(i) = k$, and $\beta(i+1) = k+1$.
	Since each simplicial operator in $\supp(x \filler k y)$ is degenerate at either $k-1$ or $k$ by \cref{filler}, it follows that each simplicial operator in $\supp\bigl((x \filler k y)\beta\bigr)$ is degenerate at either $i-1$ or $i$.
	Thus $(x \filler k y)\beta$ (which is the image of the marked face $\beta$) is indeed marked by \cref{marked}.
	This completes the proof.
\end{proof}

\subsection{Join of marked simplicial sets}
Recall that the category $\Delta_+$ admits the \emph{join} operation $\join$ which acts on the objects by
\[
[m] \join [n] = [m+1+n]
\]
and this extends to the presheaves, \emph{i.e.}\;augmented simplicial sets, by Day convolution.
Now the \emph{join} $X \oplus Y$ of two (non-augmented) simplicial sets $X$ and $Y$ is the underlying simplicial set of the join of the trivial augmentations of $X$ and $Y$.
More explicitly, we have
\[
(X \join Y)_n = \coprod_{\substack{i,j \ge -1,\\i+1+j=n}}X_i \times Y_j\
\]
for each $n \ge 0$, where $X_{-1} = Y_{-1} = \{*\}$ is a singleton.

The marked version of the join construction is the following.

\begin{definition}
	The \emph{join} $(X,tX) \join (Y,tY)$ of two marked simplicial set $(X,tX)$ and $(Y,tY)$ is the join $X \join Y$ wherein
	\[
	(x,y) \in X_i \times Y_j \subset (X \join Y)_n
	\]
	is marked if and only if either $x \in tX_i$ or $y \in tY_j$ where $tX_{-1}=tY_{-1} = \varnothing$ by convention.
\end{definition}

This join operation interacts well with the inner complicial anodyne extensions, and in particular the following proposition holds by {\cite[Observation 40]{Verity:I}}.

\begin{proposition}\label{join}
	For any inner complicial anodyne extension $f : X \to Y$ and any marked simplicial set $Z$, the join
	\[
	f \join \id : X \join Z \to Y \join Z
	\]
	is an inner complicial anodyne extension.
\end{proposition}

\section{Main theorem}

The purpose of this paper is to prove the following theorem.

\begin{theorem}\label{main}
	The map $\iota_n : \Delta[n] \to \O(-,n)$ picking out the simplex $(0,\dots,n)$ is an inner complicial anodyne extension for any $n \ge 0$.
\end{theorem}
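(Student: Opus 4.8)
The plan is to induct on $n$, building up the simplex $(0,\dots,n)$ as an iterated pasting of lower-dimensional data that is already known to be reachable by complicial anodyne extensions. The base cases $n=0$ and $n=1$ are immediate, since $\O(-,0)$ and $\O(-,1)$ have no non-degenerate simplices above the generating one and the minimal marking coincides with that of $\Delta[n]$. For the inductive step, I would factor $\iota_n$ through an intermediate marked simplicial set $Y_n$ that already contains the boundary data $(0,\dots,\hat{\imath},\dots,n)$ for each face, arranged so that the top simplex $(0,\dots,n)$ is expressed as a pasting $u \paste k v$ of simplices built from faces of strictly lower dimension. \cref{decomposition} is the key tool here: it lets me realise a sum of simplices in $\O(m,n)$ as a genuine pasting, with an explicit witness of the form $u \filler k v$, and \cref{filler complicial} then certifies that adjoining such a witness (together with the composite it fills) is a complicial anodyne extension.

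Concretely, I would first show that the inclusion of the minimal regular subset generated by the non-degenerate \emph{atomic} faces $(0,\dots,\hat{\imath},\dots,n)$ of $(0,\dots,n)$ into $\O(-,n)$ is a complicial anodyne extension, using the inductive hypothesis applied to each face $\O(-,n-1) \to \O(-,n)$ and gluing these along their common subfaces via pushouts. The more delicate point is that $\partial\Delta[n]$ does not literally sit inside $\O(-,n)$ as a regular subset with the right marking, because the composites that glue the faces together are not the naive ones — the remark following \cref{decomposition} warns precisely that support is only preserved in a weak sense. So I would instead work with the actual simplices in $\O(m,n)$ arising as iterated faces of $(0,\dots,n)$, and identify which pastings $\paste k$ assemble them. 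Each such pasting contributes a filler $\filler k$, and by \cref{filler complicial} each filler is attached by a complicial anodyne extension.

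The final and hardest step is to pass from ``the boundary plus all required fillers are reachable'' to ``the top simplex $(0,\dots,n)$ itself is reachable.'' Here I expect to need a horn-filling argument against some $\horn{n}{k} \incl \asimp{n}{k}$ with a carefully chosen middle index $k$, where the horn is filled by the already-constructed pasting data and the unique filler is forced by \cref{filler basic} to be (a simplex whose top term is) $(0,\dots,n)$. The combinatorial heart of the argument is therefore to exhibit $(0,\dots,n)$, or the relevant witnessing $(n+1)$-simplex above it, as $u \filler k v$ where $u$ and $v$ lie in the subcomplex already shown to be anodyne — this is exactly the pattern of \cite[Propositions 5.9--11]{Steiner:orientals} that the introduction flags, and verifying the composability condition $u\face{k-1} = v\face{k}$ together with the support-preservation in this ``nice case'' is where the real work lies.

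I anticipate the main obstacle to be bookkeeping: choosing the order in which faces and fillers are attached so that at each stage the map into $\O(-,n)$ is defined and regular, and so that the transfinite composite of anodyne extensions genuinely builds up to all of $\O(-,n)$ restricted to the relevant simplices, rather than merely a proper subcomplex. Controlling the supports of the intermediate elements — ensuring that each pasting $u\paste k v$ and filler $u \filler k v$ produced by \cref{decomposition} lands in $\O(-,n)$ with exactly the support I expect (and no spurious extra terms) — is the crux, and I would isolate the needed support-preservation as a separate lemma before assembling the induction.
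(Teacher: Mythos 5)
There is a genuine gap, and it is conceptual rather than technical: you have the direction of the extension backwards. The top simplex $(0,\dots,n)$ lies in the image of $\iota_n$ from the outset, so it never needs to be ``reached''; what the anodyne extension must add is everything \emph{else} in $\O(-,n)$, namely the simplices whose support contains more than one operator --- the pastings and their witnesses. Moreover, $(0,\dots,n)$ could never arise as the filler of a complicial horn: a lift against $\horn m k \incl \asimp m k$ sends the interior simplex to a \emph{marked} simplex of the target, and by \cref{marked} the simplex $(0,\dots,n)$ is unmarked, since its support is the identity operator, which is non-degenerate. Indeed, a thin filler asserts that $\face{k}$ is a composite of $\face{k-1}$ and $\face{k+1}$, which is exactly what fails for the atomic $n$-cell of $\O_n$: it is a non-identity cell pointing from the composite of the odd faces to the composite of the even ones. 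So your proposed ``final and hardest step'' --- exhibiting $(0,\dots,n)$, or a witness above it, as a horn filler over boundary data --- is not a step that can occur in any proof.

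Your first step also does not close as an induction. To glue the face copies $\O(-,n-1) \to \O(-,n)$ ``along their common subfaces via pushouts,'' you must add them one at a time, and already at the second stage the pushout is taken along the inclusion of (the image under a face operator of) $\Delta[n-1] \cup \face{0*}\O(-,n-2)$ into $\O(-,n-1)$ --- a mixed partial union whose anodyneness is strictly stronger than the inductive hypothesis that $\iota_{n-1}$ is anodyne. The paper avoids face-gluing entirely: it factors $\iota_n$ through the regular subset $A \subset \O(-,n)$ of those $x$ for which the cardinality of $\alpha^{-1}(n)$ is constant over $\alpha \in \supp(x)$. Under the identifications $\Delta[n] \cong \Delta[n-1]\oplus\Delta[0]$ and $A \cong \O(-,n-1)\oplus\Delta[0]$, the first factor is $\iota_{n-1}\oplus\id$, which is anodyne by induction together with Verity's lemma on joins of anodyne extensions; the second factor $A \incl \O(-,n)$ is the combinatorial heart (\cref{A}), proved by partitioning the non-degenerate simplices outside $A$ into parent-child pairs via the rank, corank and level invariants, and filling the corresponding horns in lexicographic order. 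Your outline correctly anticipates the tools for this last part (\cref{decomposition}, support control, an ordered transfinite filling), but it supplies no mechanism for deciding which simplices serve as horn interiors, which are the missing faces, and why the process exhausts all of $\O(-,n)$ exactly once --- that pairing is precisely what \cref{A} provides and what your proposal is missing.
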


This map $\iota_n$ sends each $m$-simplex $\alpha : [m] \to [n]$ to itself regarded as an element of $\O(m,n)$, that is,
\[
\iota_n(\alpha)_\beta = \begin{cases}
	1 & \text{if }\beta=\alpha,\\
	0 & \text{otherwise.}
\end{cases}
\]

\subsection{Factorisation of $\iota_n$}

We can immediately observe the following.

\begin{lemma}
	The map $\iota_n$ is a regular monomorphism.
\end{lemma}
\begin{proof}
	It is clear from the above explicit description of $\iota_n$ that it is a monomorphism.
	To see that $\iota_n$ is regular, consider an $m$-simplex $\alpha : [m] \to [n]$ in $\Delta[n]$.
	Then by the definition of $\Delta[n]$ as a marked simplicial set, $\alpha$ is marked if and only if it is degenerate, which is equivalent to $\iota_n(\alpha)$ being marked by \cref{marked}.
\end{proof}

The proof of \cref{main} proceeds by induction on $n$.
Note that the base case is trivial since both $\Delta[0]$ and $\O(-,0)$ are the terminal marked simplicial set.
So fix $n \ge 1$.
We factorise the map $\iota_n$ through the following intermediate object.
\begin{definition}
	Let $A[n] \subset \O(-,n)$ be the regular subset consisting of those $x \in \O(m,n)$ such that $\alpha^{-1}(n)$ has the same cardinality for all $\alpha \in \supp(x)$.
\end{definition}

The motivation behind this definition is the following.

\begin{lemma}\label{iota join}
	The map $\iota_n : \Delta[n] \to \O(-,n)$ factors through the subobject $A[n]$.
	Moreover, the factor $\Delta[n] \to A[n]$ is isomorphic to
	\[
	\begin{tikzcd}[column sep = huge]
	\Delta[n] \cong \Delta[n-1] \join \Delta[0]
	\arrow [r, "\iota_{n-1}\join\id"] &
	\O(-,n-1) \join \Delta[0].
	\end{tikzcd}
	\]
\end{lemma}

\begin{proof}
	Since $\supp(\iota_n(\alpha)) = \{\alpha\}$ is a singleton for any $\alpha : [m] \to [n]$, clearly $\iota_n$ factors through $A[n]$.
	It is straightforward to check that the map
	\[
	\O(-,n-1) \join \Delta[0] \to A[n]
	\]
	that sends each $m$-simplex
	\[
	(x,\gamma)\in \O(i,n-1) \times \Delta_+\bigl([j],[0]\bigr)
	\]
	(where $i,j \ge -1$ satisfy $i+1+j=m$) to $\sum_\beta x_\beta \cdot (\beta \join \gamma)$ is an isomorphism of simplicial sets under $\Delta[n]$.
	
	By \cref{marked}, an $m$-simplex $(x,\gamma)$ is marked in $\O(-,n-1) \join \Delta[0]$ if and only if either all $\beta \in \supp(x)$ are degenerate or $j \ge 1$.
	This is equivalent to $\beta \join \gamma$ being degenerate for all $\beta \in \supp(x)$, which is in turn equivalent to $\sum_\beta x_\beta \cdot (\beta \join \gamma)$ being marked in $A[n]$ again by \cref{marked}.
	This completes the proof.
\end{proof}

\begin{remark}
	Although the definition of $A[n]$ is combinatorially simple, it seems much harder to understand it geometrically.
	For example, consider $A[3]$.
	Since $\face 1 = (0,2,3)$ takes the value $3$ whereas $\face 3 = (0,1,2)$ does not, it is straightforward to see that there can be no $2$-simplex in $A[3]$ involving both of these odd faces.
	On the other hand, one would expect to find a pasting of the even faces $\face 0 = (1,2,3)$ and $\face 2 = (0,1,3)$ in $A[3]$, and indeed we have
	\[
	(0,1,3)-(1,1,3)+(1,2,3) \in A[3]_2.
	\]
	As an $\omega$-functor $\O_2 \to \O_3$, this element may be visualised as left below.
	\[
	\begin{tikzpicture}
		\filldraw
		(0,0) circle [radius = 1pt]
		(1.5,1) circle [radius = 1pt]
		(2,0) circle [radius = 1pt];
		
		\draw[->] (0.15,0) -- (1.85,0);
		\draw[->] (0.05,0.1) -- (0.5,1) -- (1.35,1);
		\draw[->] (1.55,0.9) -- (1.95,0.1);
		
		\draw[->, double] (1,0.3) -- (1,0.7);
	\end{tikzpicture}
	\hspace{30pt}
	\begin{tikzpicture}
		\filldraw
		(0,0) circle [radius = 1pt]
		(0.5,1) circle [radius = 1pt]
		(2,0) circle [radius = 1pt];
		
		\draw[->] (0.15,0) -- (1.85,0);
		\draw[->] (0.05,0.1) -- (0.45,0.9);
		\draw[->] (0.6,1) -- (1.5,1) -- (1.95,0.1);
		
		\draw[->, double] (1,0.3) -- (1,0.7);
	\end{tikzpicture}
	\]
	However, if we reparametrise its boundary so that it now looks like right above, the corresponding element
	\[
	(0,1,3)-(1,1,3)+(1,2,3)-(1,2,2)+(1,1,2) \in \O(2,3)
	\]
	is no longer in $A[3]_2$.
	As these examples exhibit, whether a given $\omega$-functor $\O_m \to \O_n$ belongs to $A[n]_m$ depends on its action on all dimensions, and we could not come up with any geometric intuition.
\end{remark}

\subsection{Rank and level}

Our treatment of the factor $A[n] \incl \O(-,n)$ makes use of the following notions of \emph{rank} and \emph{level}.

	\begin{definition}\label{rank}
		Let $x$ be an $m$-simplex in $\O(-,n) \setminus A[n]$.
		We define the \emph{rank} of $x$ to be
		\[
		\rank(x) \defeq \min\left(\bigcup_{\alpha \in \supp(x)}\alpha^{-1}(n)\right).
		\]
		(This is well defined because $\alpha^{-1}(n) = \varnothing$ for all $\alpha \in \supp(x)$ would imply $x \in A[n]_m$.)
		If $x$ has rank $r$, we set
		\[
			\check x_\alpha \defeq \begin{cases}
				x_\alpha, & \text{if }\alpha(r)<n,\\
				0, & \text{if }\alpha(r)=n,
			\end{cases}\quadand
			\bbar x_\alpha \defeq \begin{cases}
				0, & \text{if }\alpha(r)<n,\\
				x_\alpha, & \text{if }\alpha(r)=n.
			\end{cases}
		\]
		for $\alpha : [m] \to [n]$ so that $\check x, \bbar x \in \ZZ\Delta\bigl([m],[n]\bigr)$ satisfy $x = \check x + \bbar x$.
	\end{definition}

The decorations \,$\check{}$\, and \,$\bbar{}$\, correspond to the inequality $<$ and the equality $=$ respectively, which should help the reader remember which is which.

\begin{example}\label{example}
	Consider the $2$-simplices
	\[
	\begin{split}
	x &= (0,1,2) - (1,1,2) + (1,2,2),\\
	w &= (0,1,1) - (1,1,1) + (1,1,2)
	\end{split}
	\]
	in $\O(-,2) \setminus A[2]$; $x$ corresponds to \cref{Omn} middle, and $w = (0,1) \filler 1 (1,2)$ witnesses the composition of the unique non-trivial composable pair in $\O_2$.
	We have $\rank(x) = 1$ and $\rank(w) = 2$, so
	\[
	\begin{alignedat}{3}
		\check x &= (0,1,2) - (1,1,2),& & \quad &
		\bbar x &= (1,2,2),\\
		\check w &= (0,1,1) - (1,1,1),& & \quad &
		\bbar w &= (1,1,2).
	\end{alignedat}
	\]
\end{example}

\begin{remark}
	In \cref{rl} (and also in \cref{dag,uv}), a row labelled ``$y$'' depicts what typical $\alpha \in \supp(y)$ looks like.
	Any shaded area indicates a sub-interval of $[m]$ on which all $\alpha \in \supp(y)$ are constant.
	For example, the lower half of \cref{rl} depicts the fact that we have $\alpha(i) = n$ for all $\alpha \in \supp(\bbar x)$ and for all $r \le i \le m$.
	(Note however that we might have $r=m$, in which case the shaded area really consists of a single $n$.)
	Question marks indicate where we know nothing (or very little) about the values of $\alpha$.
\end{remark}

\begin{remark}
	In \cite[Definition 4.2]{Steiner:universal}, Steiner defined the rank of $x \in \O(m,n)$ to be the smallest $r$ such that $x(r) = x(m)$.
	So for any $x$ in $\O(-,n) \setminus A[n]$ (which must satisfy $x(m) = n$), the rank of $x$ in our sense is at most that of Steiner's.
	We suspect that the two might in fact coincide, but we did not attempt to prove it.
\end{remark}

\begin{definition}
	Let $x$ be an $m$-simplex in $\O(-,n) \setminus A[n]$ with rank $r$.
	We define the \emph{level} of $x$ to be
	\[
	\level(x) \defeq \min\bigl\{i \in [m] :  \alpha(i) = \alpha(r)\text{ for all }\alpha \in \supp(\check x)\bigr\}.
	\]
\end{definition}

\begin{example}
	For $x$ and $w$ from \cref{example}, we have $\level(x) = \level (w) = 1$.
\end{example}

\begin{remark}
	If $x$ is an $m$-simplex in $\O(-,n) \setminus A[n]$ with rank $r$ and level $\ell$, then we must have $\ell \le r$.
	This is why we have $\ell$ to the $\ell$eft and $r$ to the $r$ight in \cref{rl}, though the figure is slightly misleading since we might have $\ell = r$.
	The symbol $\star$ (as well as $\dagger$, $\lozenge$, and $\blacklozenge$ which will appear in \cref{dag,uv}) is a placeholder whose actual value depends on the specific simplicial operator.
\end{remark}

\begin{figure}
	\begin{tikzpicture}[scale = 0.8]
		
		\filldraw[gray!20!white]
		(2.7,0.7) rectangle (5.3,1.3)
		(4.7,-0.3) rectangle (6.8,0.3);
		
		\draw[gray!50!white]
		(1.2,0.7) rectangle (6.8,1.3)
		(1.2,-0.3) rectangle (6.8,0.3);
		
		\node at (0.5,1) {$\check x$};
		\node at (0.5,0) {$\bbar x$};
		
		\node (ulb) at (1.5,1) {$?$};
		\node (ule) at (2.5,1) {$?$};
		\node (umb) at (3,1) {$\star$};
		\node (ume) at (5,1) {$\star$};
		\node (urb) at (5.5,1) {$?$};
		\node (ure) at (6.5,1) {$?$};
		
		\node (llb) at (1.5,0) {$?$};
		\node (lme) at (4.5,0) {$?$};
		\node (lrb) at (5,0) {$n$};
		\node (lre) at (6.5,0) {$n$};
		
		\draw[thick, dotted]
		(ulb.east) -- (ule.west)
		(umb.east) -- (ume.west)
		(urb.east) -- (ure.west)
		(llb.east) -- (lme.west)
		(lrb.east) -- (lre.west);
		
		\node[scale = 0.9] at (3,1.7) {$\ell$};
		\node[scale = 0.9] at (5,1.7) {$r$};
		\node[scale = 0.9] at (1.5,1.7) {$0$};
		\node[scale = 0.9] at (6.5,1.7) {$m$};
	\end{tikzpicture}
	\caption{$r$, $\check x$, $\bbar x$ and $\ell$}\label{rl}
\end{figure}

The following observation will be useful.

\begin{lemma}\label{level}
	Let $x$ be an $m$-simplex in $\O(-,n) \setminus A[n]$ with rank $r$ and level $\ell$.
	Then we have $\ell \ge 1$.
\end{lemma}

\begin{proof}
	Suppose for contradiction that $\ell=0$.
	The following series of claims leads to the conclusion
	\[
	\sum_{\alpha : [m] \to [n]} x_\alpha = 0
	\]
	which contradicts the assumption that $x$ satisfies (O1).
	
	Consider $y = x  (0,\dots,r) \in \O(r,n)$ and $z = x  (r,\dots,m) \in \O(m-r,n)$.
	Note that we have
	\[
	y_{\beta} = \sum_{\substack{\alpha : [m] \to [n]\\\alpha(0,\dots,r) = \beta}}x_{\alpha} \quadand z_\gamma = \sum_{\substack{\alpha : [m] \to [n]\\\alpha(r,\dots,m) = \gamma}}x_{\alpha}
	\]
	for any $\beta : [r] \to [n]$ and $\gamma : [m-r] \to [n]$.
	
	\begin{claim}\label{claim1a}\leavevmode
		\begin{itemize}
			\item [(y)] For any $\beta : [r] \to [n]$ with $\beta(r) = n$, we have $y_\beta = x_{\beta\degen r^{m-r}}$.
			\item [(z)] For any $\gamma : [m-r] \to [n]$ with $\gamma(0) < n$, we have $z_\gamma = x_{\gamma\degen 0^r}$.
		\end{itemize}
		
	\end{claim}
	\begin{proof}[Proof of Claim]
		(y) It suffices to observe that, if $\beta(r) = n$, the only solution to the equation $\alpha(0,\dots,r) = \beta$ in $\Delta\bigl([m],[n]\bigr)$ is $\alpha = \beta\degen r ^{m-r}$.
		
		(z) Fix $\gamma : [m-r] \to [n]$ with $\gamma(0) < n$ and consider the equation $\alpha(r,\dots,m) = \gamma$.
		Note that if $\alpha \in \supp(x)$ is a solution, then $\alpha(r) = \gamma(0) < n$ and so $\alpha \in \supp(\check x)$.
		Since we are assuming $\ell = 0$, we must have $\alpha(i) = \alpha(r)$ for all $0 \le i \le r$, hence $\alpha = \gamma\degen 0^r$.
	\end{proof}
	
	\begin{claim}\label{claim1b}\leavevmode
		\begin{itemize}
			\item[(y)] We have $\bigl\{\alpha(0,\dots,r) : \alpha \in \supp(\bbar x)\bigr\} \subset \supp(y)$.
			\item[(z)] We have $\bigl\{\alpha(r,\dots,m) : \alpha \in \supp(\check x)\bigr\} \subset \supp(z)$.
		\end{itemize}
		
	\end{claim}
	\begin{proof}[Proof of Claim]
		For any $\alpha \in \supp(\bbar x)$, we have $\alpha(0,\dots,r)(r) = \alpha(r) = n$ and $\alpha(0,\dots,r)\degen r^{m-r} = \alpha$.
		So substituting $\beta = \alpha(0,\dots,r)$ into \cref{claim1a}(y) yields
		\[
		y_{\alpha(0,\dots,r)} = x_{\alpha(0,\dots,r)\degen r^{m-r}} = x_\alpha \neq 0.
		\]
		This proves (y), and the proof for (z) is similar.
	\end{proof}
	\begin{claim}\label{claim1c}\leavevmode
		\begin{itemize}
			\item [(y)] We have $\displaystyle \sum_{\substack{\alpha : [m] \to [n]\\\alpha(r) < n}}x_\alpha = \sum_{\substack{\beta : [r] \to [n]\\\beta(r) < n}} y_\beta.$
			\item [(z)] We have $\displaystyle \sum_{\substack{\alpha : [m] \to [n]\\\alpha(r) = n}}x_\alpha = z_{(n,\dots,n)}.$
		\end{itemize}
		
	\end{claim}
	\begin{proof}[Proof of Claim]
		(y) Since we have
		\[
		\sum_{\alpha : [m] \to [n]}x_\alpha = \sum_{\beta : [r] \to [n]} \left(\sum_{\substack{\alpha : [m] \to [n]\\\alpha(0,\dots,r) = \beta}}x_{\alpha}\right) = \sum_{\beta : [r] \to [n]}y_\beta,
		\]
		it suffices to prove
		\[
		\sum_{\substack{\alpha : [m] \to [n]\\\alpha(r) = n}}x_\alpha = \sum_{\substack{\beta : [r] \to [n]\\\beta(r) = n}} y_\beta.
		\]
		The latter equality follows from \cref{claim1a}(y) because a simplicial operator $\alpha : [m] \to [n]$ satisfies $\alpha(r) = n$ if and only if it is of the form $\alpha = \beta\degen r^{m-r}$ for some $\beta : [r] \to [n]$ with $\beta(r) = n$.
		
		(z) This equality follows from the observation that a simplicial operator $\alpha : [m] \to [n]$ satisfies $\alpha(r) = n$ if and only if $\alpha(r,\dots,m) = (n,\dots,n)$.
	\end{proof}
	
	\begin{claim}\label{claim1d}\leavevmode
		\begin{itemize}
			\item [(y)] We have $y_\beta = 0$ for any $\beta : [r] \to [n]$ with $\beta(r) < n$.
			\item [(z)] We have $z_{(n,\dots,n)} = 0$.
		\end{itemize}
		
	\end{claim}
	\begin{proof}[Proof of Claim]
		(y) 
		It follows from our assumption $\ell = 0$ that $y_\beta = 0$ for any non-constant $\beta : [r] \to [n]$ with $\beta(r) < n$.
		So it suffices to show $y_{(k,\dots,k)} = 0$ for $0 \le k \le n-1$.
		Observe that, for fixed $0 \le k \le n-1$, the constant operator $(k,\dots,k)$ is the only potential element of $\supp(y)$ that sends $r$ to $k$.
		So the coefficient of $(k)$ in $y(r)$ is $y_{(k,\dots,k)}$.
		But $y(r) = (n)$ by \cref{terminus} and \cref{claim1b}(y), so we must have $y_{(k,\dots,k)} = 0$.
		
		(z) Since $(n,\dots,n)$ is the only simplicial operator $[m-r] \to [n]$ that sends $0$ to $n$, the coefficient of $(n)$ in $z(0)$ is $z_{(n,\dots,n)}$.
		But $z(0) < n$ by \cref{terminus} and \cref{claim1b}(z), so we must have $z_{(n,\dots,n)} = 0$.
	\end{proof}
	As we asserted at the beginning of this proof, we can now deduce
	\[
	\begin{aligned}
		\sum_{\alpha : [m] \to [n]}x_\alpha &= \sum_{\substack{\alpha : [m] \to [n]\\\alpha(r) < n}}x_\alpha + \sum_{\substack{\alpha : [m] \to [n]\\\alpha(r) = n}}x_\alpha & & \\
		&= \sum_{\substack{\beta : [r] \to [n]\\\beta(r) < n}} y_\beta + z_{(n,\dots,n)} & & \text{(\cref{claim1c})} \\
		&= 0 & & \text{(\cref{claim1d})}
	\end{aligned}
	\]
	which leads to the desired contradiction.
	This completes the proof.
\end{proof}

\subsection{Parent-child pairing}

We will eventually show that the inclusion $A[n] \incl \O(-,n)$ can be obtained by filling horns and extending marking.
The following constructions provide the interior-face pairing for those horns.

\begin{definition}
	Let $x$ be a simplex in $\O(-,n)\setminus A[n]$ with rank $r$ and level $\ell$.
	Consider the condition
	\begin{itemize}
		\item[$(\ddagger)$] $\bbar x$ is degenerate at $\ell-1$.
	\end{itemize}
	If $x$ satisfies $(\ddagger)$, we define its \emph{child} to be $x\face \ell$.
	If $x$ does not satisfy $(\ddagger)$, we define its \emph{parent} to be $\check x \degen{\ell} + \bbar x \degen{\ell-1}$.
\end{definition}

\begin{example}
	Recall our running examples
	\[
	\begin{split}
		x &= (0,1,2) - (1,1,2) + (1,2,2),\\
		w &= (0,1,1) - (1,1,1) + (1,1,2)
	\end{split}
	\]
	in $\O(-,2) \setminus A[2]$.
	Since $\bbar x = (1,2,2)$ is not degenerate at $\level(x)-1 = 0$, $x$ does not satisfy $(\ddagger)$.
	Its parent is
	\[
	\check x \degen 1 + \bbar x \degen 0 = (0,1,1,2) - (1,1,1,2) + (1,1,2,2).
	\]
	On the other hand, $\bbar w = (1,1,2)$ is degenerate at $\level(w)-1 = 0$, so $w$ satisfies $(\ddagger)$.
	Its child is
	\[
	w\face 1 = (0,1) - (1,1) + (1,2).
	\]
\end{example}

\begin{remark}\label{l<r}
	Since any $x$ in $\O(-,n) \setminus A[n]$ has positive level by \cref{level}, it always makes sense to ask whether $x$ satisfies $(\ddagger)$.
	Note that in the case $x$ does satisfy $(\ddagger)$, we must have $\ell<r$ since $\alpha(r-1) < n = \alpha(r)$ for all $\alpha \in \supp(\bbar x)$.
	(It is however possible to have $\ell = r-1$, in which case the gap between the rightmost $\dagger$ and the leftmost $n$ in \cref{dag} is empty.)
\end{remark}

\begin{figure}
	\begin{tikzpicture}[scale = 0.8]
		
		\filldraw[gray!20!white]
		(2.2,-0.3) rectangle (3.3,0.3)
		(2.7,0.7) rectangle (5.3,1.3)
		(4.7,-0.3) rectangle (6.8,0.3);
		
		\draw[gray!50!white]
		(0.7,0.7) rectangle (6.8,1.3)
		(0.7,-0.3) rectangle (6.8,0.3);
		
		\node at (0,1) {$\check x$};
		\node at (0,0) {$\bbar x$};
		
		\node (ulb) at (1,1) {$?$};
		\node (ule) at (2.5,1) {$?$};
		\node (umb) at (3,1) {$\star$};
		\node (ume) at (5,1) {$\star$};
		\node (urb) at (5.5,1) {$?$};
		\node (ure) at (6.5,1) {$?$};
		
		\node (llb) at (1,0) {$?$};
		\node (lle) at (2,0) {$?$};
		\node at (2.5,0) {$\dagger$};
		\node at (3,0) {$\dagger$};
		\node (lmb) at (3.5,0) {$?$};
		\node (lme) at (4.5,0) {$?$};
		\node (lrb) at (5,0) {$n$};
		\node (lre) at (6.5,0) {$n$};
		
		\draw[thick, dotted]
		(ulb.east) -- (ule.west)
		(umb.east) -- (ume.west)
		(urb.east) -- (ure.west)
		(llb.east) -- (lle.west)
		(lmb.east) -- (lme.west)
		(lrb.east) -- (lre.west);
		
		\node[scale = 0.9] at (3,1.7) {$\ell$};
		\node[scale = 0.9] at (5,1.7) {$r$};
		\node[scale = 0.9] at (1,1.7) {$0$};
		\node[scale = 0.9] at (6.5,1.7) {$m$};
	\end{tikzpicture}
	\caption{The condition $(\ddagger)$}\label{dag}
\end{figure}

From now on, we will focus on the \emph{non-degenerate} simplices in $\O(-,n) \setminus A[n]$.

\begin{lemma}\label{dagchild}
	Let $x$ be a non-degenerate $m$-simplex in $\O(-,n) \setminus A[n]$ with rank $r$ and level $\ell$, and suppose that $x$ satisfies $(\ddagger)$.
	Then its child $y = x\face \ell$ is a non-degenerate $(m-1)$-simplex in $\O(-,n) \setminus A[n]$ such that:
	\begin{itemize}
		\item $\rank(y)=r-1$;
		\item $\level(y)=\ell$; and
		\item $y$ does not satisfy $(\ddagger)$.
	\end{itemize}
Moreover we have $\check y = \check x \face \ell$ and $\bbar y = \bbar x \face \ell$.
\end{lemma}
\begin{proof}
	We first prove that any two simplicial operators $\alpha,\beta \in \supp(x)$ with $\alpha\face\ell = \beta\face\ell$ must be equal.
	Since we have $\ell < r$ as observed in \cref{l<r}, the assumption $\alpha\face\ell = \beta\face\ell$ in particular implies $\alpha(r)=\beta(r)$, so either $\alpha,\beta\in \supp(\check x)$ or $\alpha,\beta \in \supp(\bbar x)$.
	In the former case, it follows from $\ell < r$ that both $\alpha$ and $\beta$ are degenerate at $\ell$, so we have
		\[
		\alpha = \alpha\face \ell \degen \ell = \beta\face \ell \degen \ell = \beta.
		\]
	In the latter case, it follows from $(\ddagger)$ that both $\alpha$ and $\beta$ are degenerate at $\ell-1$, so
	\[
	\alpha = \alpha\face \ell \degen {\ell-1} = \beta\face \ell \degen {\ell-1} = \beta.
	\]
In either case, we have $\alpha = \beta$ as desired.
It follows that no terms in $x$ cancel out after applying $\face\ell$; more precisely, the support of $y = x \face \ell$ is given by
\[
\supp(y) = \bigl\{\alpha\face \ell : \alpha \in \supp(x)\bigr\}.
\]

Everything we asserted about $y$ follows from this description of $\supp(y)$ and the fact $\ell < r$.

Firstly, observe that we have $|(\alpha\face\ell)^{-1}(n)| = |\alpha^{-1}(n)|$ for all $\alpha \in \supp(x)$.
So $x \notin A[n]_m$ implies $y \notin A[n]_{m-1}$.

The equations $\rank(y)=r-1$, $\check y = \check x \face \ell$, $\bbar y = \bbar x \face \ell$, and $\level(y)=\ell$ are all straightforward to verify (in this order).

Recall that a simplex in $\O(-,n)$ is degenerate at some $k$ if and only if all simplicial operators in its support are degenerate at $k$.
So, if $y$ is degenerate at some $k < r$ (respectively $k \ge r$) then $x$ must be degenerate at $k$ (respectively $k+1$).
Taking the contrapositive, we deduce that $y$ is non-degenerate.

Finally, observe that $y$ satisfies $(\ddagger)$ if and only if $\bbar x \face \ell$ is degenerate at $\ell$ because we have $\bbar y = \bbar x \face \ell$ and $\level(y) = \ell$.
However, this would imply that $\bbar x$ and hence $x$ are degenerate at $\ell$, which contradicts our assumption that $x$ is non-degenerate.
\end{proof}

\begin{lemma}\label{dagparent}
	Let $x$ be a non-degenerate $m$-simplex in $\O(-,n) \setminus A[n]$ with rank $r$ and level $\ell$, and suppose that $x$ does not satisfy $(\ddagger)$.
	Then its parent $w = \check x \degen{\ell} + \bbar x \degen{\ell-1}$ is a non-degenerate $(m+1)$-simplex in $\O(-,n) \setminus A[n]$ such that:
	\begin{itemize}
		\item $\rank(w) = r+1$;
		\item $\level(w) = \ell$; and
		\item $w$ satisfies $(\ddagger)$.
	\end{itemize}
	Moreover we have $w \face \ell = x$.
\end{lemma}

\begin{figure}
\[
\begin{tikzpicture}[scale = 0.8]
	\filldraw[gray!20!white]
	(2.7,5.7) rectangle (5.3,6.3)
	(4.7,4.7) rectangle (6.8,5.3);
	\draw[gray!50!white]
	(0.7,5.7) rectangle (6.8,6.3)
	(0.7,4.7) rectangle (6.8,5.3);
	\node at (-0.5,6) {$\check x$};
	\node at (-0.5,5) {$\bbar x$};
	\node (ulbt) at (1,6) {$?$};
	\node (ulet) at (2,6) {$?$};
	\node at (2.5,6) {$\lozenge$};
	\node (umbt) at (3,6) {$\star$};
	\node (umet) at (5,6) {$\star$};
	\node (urbt) at (5.5,6) {$?$};
	\node (uret) at (6.5,6) {$?$};
	\node (llbt) at (1,5) {$?$};
	\node (llet) at (2,5) {$?$};
	\node at (2.5,5) {$\dagger$};
	\node at (3,5) {$\blacklozenge$};
	\node (lmbt) at (3.5,5) {$?$};
	\node (lmet) at (4.5,5) {$?$};
	\node (lrbt) at (5,5) {$n$};
	\node (lret) at (6.5,5) {$n$};
	\draw[thick, dotted]
	(ulbt.east) -- (ulet.west)
	(umbt.east) -- (umet.west)
	(urbt.east) -- (uret.west)
	(llbt.east) -- (llet.west)
	(lmbt.east) -- (lmet.west)
	(lrbt.east) -- (lret.west);

	\filldraw[gray!20!white]
	(2.2,2.2) rectangle (3.3,2.8)
	(2.7,3.2) rectangle (5.3,3.8)
	(4.7,2.2) rectangle (6.8,2.8);
	\draw[gray!50!white]
	(0.7,3.2) rectangle (6.8,3.8)
	(0.7,2.2) rectangle (6.8,2.8);
	\node at (-0.5,3.5) {$\check x$};
	\node at (-0.5,2.5) {$\bbar x\face{\ell}\degen{\ell-1}$};
	\node (ulbs) at (1,3.5) {$?$};
	\node (ules) at (2,3.5) {$?$};
	\node at (2.5,3.5) {$\lozenge$};
	\node (umbs) at (3,3.5) {$\star$};
	\node (umes) at (5,3.5) {$\star$};
	\node (urbs) at (5.5,3.5) {$?$};
	\node (ures) at (6.5,3.5) {$?$};
	\node (llbs) at (1,2.5) {$?$};
	\node (lles) at (2,2.5) {$?$};
	\node at (2.5,2.5) {$\dagger$};
	\node at (3,2.5) {$\dagger$};
	\node (lmbs) at (3.5,2.5) {$?$};
	\node (lmes) at (4.5,2.5) {$?$};
	\node (lrbs) at (5,2.5) {$n$};
	\node (lres) at (6.5,2.5) {$n$};
	\draw[thick, dotted]
	(ulbs.east) -- (ules.west)
	(umbs.east) -- (umes.west)
	(urbs.east) -- (ures.west)
	(llbs.east) -- (lles.west)
	(lmbs.east) -- (lmes.west)
	(lrbs.east) -- (lres.west);

	\filldraw[gray!20!white]
	(2.2,0.7) rectangle (5.3,1.3)
	(4.7,-0.3) rectangle (6.8,0.3);
	\draw[gray!50!white]
	(0.7,0.7) rectangle (6.8,1.3)
	(0.7,-0.3) rectangle (6.8,0.3);
	\node at (-0.5,1) {$\check x\face{\ell-1}\degen{\ell-1}$};
	\node at (-0.5,0) {$\bbar x$};
	\node (ulb) at (1,1) {$?$};
	\node (ule) at (2,1) {$?$};
	\node at (2.5,1) {$\star$};
	\node (umb) at (3,1) {$\star$};
	\node (ume) at (5,1) {$\star$};
	\node (urb) at (5.5,1) {$?$};
	\node (ure) at (6.5,1) {$?$};
	\node (llb) at (1,0) {$?$};
	\node (lle) at (2,0) {$?$};
	\node at (2.5,0) {$\dagger$};
	\node at (3,0) {$\blacklozenge$};
	\node (lmb) at (3.5,0) {$?$};
	\node (lme) at (4.5,0) {$?$};
	\node (lrb) at (5,0) {$n$};
	\node (lre) at (6.5,0) {$n$};
	\draw[thick, dotted]
	(ulb.east) -- (ule.west)
	(umb.east) -- (ume.west)
	(urb.east) -- (ure.west)
	(llb.east) -- (lle.west)
	(lmb.east) -- (lme.west)
	(lrb.east) -- (lre.west);

	\draw [decorate,decoration={brace,amplitude=5pt}]
	(-1.7,4.6) -- (-1.7,6.4) node [midway, xshift = -15pt] {$x$};
	
	\draw [decorate,decoration={brace,amplitude=5pt}]
	(-1.7,2.1) -- (-1.7,3.9) node [midway, xshift = -15pt] {$u$};
	
	\draw [decorate,decoration={brace,amplitude=5pt}]
	(-1.7,-0.4) -- (-1.7,1.4) node [midway, xshift = -15pt] {$v$};
	
	\node[scale = 0.9] at (3,6.7) {$\ell$};
	\node[scale = 0.9] at (5,6.7) {$r$};
	\node[scale = 0.9] at (1,6.7) {$0$};
	\node[scale = 0.9] at (6.5,6.7) {$m$};
\end{tikzpicture}
\]
\caption{$x$, $u$ and $v$ in \cref{dagparent}}\label{uv}
\end{figure}

\begin{proof}
	We will first assume $w \in \O(m+1,n)$ and prove the other assertions in the lemma (because this part of the proof is similar to that of \cref{dagchild} and the reader should find it easier to follow it before going through a completely different kind of combinatorics).
	
	Let $\alpha,\beta \in \supp(x)$ and suppose that one of the following holds:
	\begin{itemize}
		\item $\alpha,\beta \in \supp(\check x)$ and $\alpha\degen\ell = \beta\degen\ell$;
		\item $\alpha \in \supp(\check x)$, $\beta \in \supp(\bbar x)$ and $\alpha\degen\ell = \beta\degen{\ell-1}$; or
		\item $\alpha,\beta\in\supp(\bbar x)$ and $\alpha\degen{\ell-1} = \beta\degen{\ell-1}$.
	\end{itemize}
	Then, since $\face{\ell}$ is a common section of $\degen{\ell}$ and $\degen{\ell-1}$, acting by $\face\ell$ on any of the equalities yields $\alpha = \beta$.
	It follows that the support of $w = \check x \degen{\ell} + \bbar x \degen{\ell-1}$ is given by
	\[
	\supp(w) = \bigl\{\alpha\degen{\ell} : \alpha \in \supp(\check x)\bigr\} \cup \bigl\{\alpha\degen{\ell-1} : \alpha \in \supp(\bbar x)\bigr\}.
	\]
	
	Now everything (except for $w \in \O(m+1,n)$) follows from this description of $\supp(w)$.
	
	Firstly, choose $\alpha \in \supp(\check x)$ and $\beta \in \supp(\bbar x)$, which necessarily satisfy $|\alpha^{-1}(n)| < |\beta^{-1}(n)|$.
	Note that $\ell \le r$ implies $\alpha(\ell) < n$ and $\beta(\ell-1) < n$.
	We can thus deduce
	\[
	|(\alpha\degen\ell)^{-1}(n)| = |\alpha^{-1}(n)| < |\beta^{-1}(n)| = |(\beta\degen{\ell-1})^{-1}(n)|.
	\]
	Since we know $\alpha\degen\ell, \beta\degen{\ell-1} \in \supp(w)$, this implies $w \notin A[n]_{m+1}$.
	
	It is straightforward to check the equations $w \face\ell = x$, $\rank(w) = r+1$, $\check w = \check x \degen\ell$, $\bbar w = \bbar x\degen{\ell-1}$ and $\level(w) = \ell$.
	The last two of these equalities imply that $w$ satisfies $(\ddagger)$.
	
	Suppose for contradiction that $w$ is degenerate at some $k$.
	\begin{itemize}
		\item If $k < \ell - 1$ then $x$ must be degenerate at $k$.
		\item If $k = \ell-1$ then $\alpha\degen\ell$ and hence $\alpha$ must be degenerate at $\ell-1$ for all $\alpha \in \supp(\check x)$, which contradicts the minimality of $\ell$.
		\item If $k = \ell$ then in particular $\alpha\degen{\ell-1}$ is degenerate at $\ell$ for each $\alpha \in \supp(\bbar x)$, or equivalently each $\alpha \in \supp(\bbar x)$ is degenerate at $\ell-1$.
		This contradicts the assumption that $x$ does not satisfy $(\ddagger)$.
		\item If $k > \ell$ then $x$ must be degenerate at $k-1$.
	\end{itemize}
	Since each case indeed leads to a contradiction, $w$ is non-degenerate.
	
	Now we prove $w \in \O(m,n+1)$.
	Recall that we have $\ell \ge 1$ by \cref{level}.
	Let
	\[
	u = \check x + \bbar x \face \ell \degen{\ell-1} \quadand v = \check x \face{\ell-1}\degen{\ell-1}+\bbar x.
	\]
	(See \cref{uv} for what they look like; note however that in the special case $\ell = r$, we have $\blacklozenge = n$.)
	Then we have $w = u \filler \ell v$ by \cref{decomposition}.
	Thus, thanks to \cref{O pasting}, it suffices to prove $u,v \in \O(m,n)$.
	Both $u$ and $v$ clearly satisfy (O1), and it remains to show that they also satisfy (O2).
	So fix injective maps $\beta : [p] \to [m]$ and $\gamma : [p] \to [n]$ in $\Delta$.
	We will divide the proof into the following six claims.
	
	\begin{claim}\label{claim2a}
		If $\ell \notin \im(\beta)$ then $(u\beta)_\gamma\ge 0$.
	\end{claim}
	\begin{proof}[Proof of Claim]
		Suppose $\ell \notin \im(\beta)$.
		Then $\beta$ can be written as $\beta = \face\ell\beta'$ for some $\beta' : [p] \to [m-1]$.
		So we have
		\[
		\begin{split}
		u\beta &= (\check x + \bbar x \face \ell \degen{\ell-1})\face\ell\beta'\\
		&= \check x \face\ell\beta'+ \bbar x \face \ell \degen{\ell-1}\face\ell\beta'\\
		&= \check x \face\ell\beta'+ \bbar x \face \ell \beta'\\
		&= (\check x + \bbar x)\face\ell\beta'\\
		&= x\beta.
		\end{split}
		\]
		Since $x$ satisfies (O2), we can deduce $(u\beta)_\gamma = (x\beta)_\gamma \ge 0$.
	\end{proof}
	
	\begin{claim}\label{claim2b}
		If $\ell-1 \notin \im(\beta)$ then $(v\beta)_\gamma \ge 0$.
	\end{claim}
	\begin{proof}[Proof of Claim]
		Similar to \cref{claim2a}.
	\end{proof}
	
	\begin{claim}\label{claim2c}
		If $\ell \in \im(\beta)$ then $(\check x\beta)_\gamma \ge 0$. 
	\end{claim}
	\begin{proof}[Proof of Claim]
		Let $k \in [p]$ be the unique integer satisfying $\beta(k) = \ell$.
		If $\gamma(k) = n$ then $(\check x \beta)_\gamma = 0$ by the definition of $\check x$.
		So assume $\gamma(k) < n$.
		
		\begin{case}
		Suppose $k \le p-1$ and $\beta(k+1) \le r$.
		In this case, for any $\alpha \in \supp(\check x)$, we have
		\[
		(\alpha\beta)(k+1) = (\alpha\beta)(k)
		\]
		since $\ell = \beta(k) < \beta(k+1) \le r$.
		Thus $\check x\beta$ is degenerate at $k$, which implies $(\check x\beta)_\gamma = 0$ because $\gamma$ is non-degenerate.
		\end{case}
		
		\begin{case}
		Suppose that we have either $k=p$ or $\beta(k+1) > r$.
		In this case, one can easily check that $\beta' : [p] \to [m]$ defined by
		\[
		\beta'(i) = \begin{cases}
			r, & \text{if }i=k,\\
			\beta(i), & \text{otherwise}
		\end{cases}
		\]
		is an injective simplicial operator.
		Since $\alpha(\ell) = \alpha(r)$ for all $\alpha \in \supp(\check x)$, we have $\check x \beta = \check x \beta'$.
		Moreover, we have $(\bbar x\beta')_\gamma = 0$ since $\gamma(k) < n$ whereas
		\[
		(\alpha\beta')(k) = \alpha(r) = n
		\]
		for all $\alpha \in \supp(\bbar x)$.
		Thus we have
		\[
			(\check x \beta)_\gamma =  (\check x \beta')_\gamma
			= (\check x\beta')_\gamma + (\bbar x\beta')_\gamma
			= (x\beta')_\gamma
			\ge 0
		\]
		where the last inequality follows from the assumption that $x$ satisfies (O2).
		\end{case}
		This completes the proof of \cref{claim2c}.
	\end{proof}

	\begin{claim}\label{claim2d}
		If $\ell-1 \in \im(\beta)$ then $(\bbar x\beta)_\gamma \ge 0$.
	\end{claim}
	\begin{proof}[Proof of Claim]
		Suppose $\ell-1 \in \im(\beta)$.
		If $(\bbar x\beta)_\gamma = 0$ then we are done, so assume $(\bbar x\beta)_\gamma \neq 0$.
		
		\begin{case}
			Suppose $\gamma(p) = n$.
			Then at least one $\alpha \in \supp(\bbar x)$ must satisfy
			\[
			(\alpha\beta)(p) = \gamma(p) = n,
			\]
			which implies $\beta(p) \ge r$.
			If $\beta(p-1) \ge r$ also holds, then for each $\alpha \in \supp(\bbar x)$, we have
			\[
			(\alpha\beta)(p-1) = n = (\alpha\beta)(p).
			\]
			But this is impossible since at least one $\alpha \in \supp(\bbar x)$ must satisfy $\alpha\beta = \gamma$ and $\gamma$ is non-degenerate.
			Thus we must have $\beta(p-1) < r$, and consequently $\beta' : [p] \to [m]$ defined by
			\[
			\beta'(i) = \begin{cases}
				r & \text{if }i=p,\\
				\beta(i) & \text{otherwise}
			\end{cases}
			\]
			is an injective map in $\Delta$.
			It is straightforward to check that each $\alpha \in \supp(\bbar x)$ satisfies $\alpha\beta = \alpha\beta'$ and no $\alpha' \in \supp(\check x)$ satisfies $\alpha'\beta' = \gamma$.
			Therefore we have
			\[
			(\bbar x\beta)_\gamma = (\bbar x\beta')_\gamma = (\bbar x\beta')_\gamma + (\check x \beta')_\gamma = (x\beta')_\gamma \ge 0
			\]
			where the last inequality follows from the assumption that $x$ satisfies (O2).
		\end{case}
		\begin{case}
			Suppose $\gamma(p) < n$.
			In this case, we must have $\beta(p) < r$ for otherwise we would have
			\[
			(\alpha\beta)(p) = n \neq \gamma(p)
			\]
			for all $\alpha \in \supp(\bbar x)$.
			Thus $\beta' : [p+1] \to [m]$ and $\gamma' : [p+1] \to [n]$ given by
			\[
			\beta'(i) = \begin{cases}
				r & \text{if }i=p+1,\\
				\beta(i) & \text{otherwise,}
			\end{cases}
			\quadand
			\gamma'(i) = \begin{cases}
				n & \text{if }i=p+1,\\
				\gamma(i) & \text{otherwise}
			\end{cases}
			\]
			are both injective maps in $\Delta$.
			It is straightforward to check that, for each $\alpha \in \supp(\bbar x)$, we have $\alpha\beta = \gamma$ if and only if $\alpha\beta' = \gamma'$.
			Moreover, no $\alpha' \in \supp(\check x)$ satisfies $\alpha'\beta' = \gamma'$.
			Therefore we have
			\[
			(\bbar x \beta)_\gamma = (\bbar x \beta')_{\gamma'} = (\bbar x \beta')_{\gamma'} + (\check x\beta')_{\gamma'} = (x\beta')_{\gamma'} \ge 0
			\]
			where the last inequality follows from the assumption that $x$ satisfies (O2).
		\end{case}
	This completes the proof of \cref{claim2d}.
	\end{proof}

	\begin{claim}\label{claim2e}
		If $\ell \in \im(\beta)$ then $(\bbar x\face\ell\degen{\ell-1}\beta)_\gamma \ge 0$.
	\end{claim}
	\begin{proof}[Proof of Claim]
		Suppose $\ell \in \im(\beta)$.
		If moreover $\ell-1 \in \im(\beta)$ then  $\bbar x\face\ell\degen{\ell-1}\beta$ is degenerate whereas $\gamma$ is not, which implies $(\bbar x\face\ell\degen{\ell-1}\beta)_\gamma = 0$.
		So suppose $\ell-1 \notin \im(\beta)$.
		Then $\beta$ can be written as $\beta = \face{\ell-1}\beta'$ for some injective $\beta'$ in $\Delta$, hence
		\[
		\bbar x \face \ell \degen{\ell-1}\beta = \bbar x \face \ell \degen{\ell-1}\face{\ell-1}\beta' = \bbar x \face \ell\beta'.
		\]
		Since $\ell \in \im(\beta)$ implies $\ell-1 \in \im(\beta')$, we have $\ell-1 \in \im(\face \ell \beta')$.
		Thus the desired inequality $(\bbar x \face \ell \beta')_\gamma \ge 0$ follows from \cref{claim2d}.
	\end{proof}

	\begin{claim}\label{claim2f}
		If $\ell-1 \in \im(\beta)$ then $(\check x\face{\ell-1}\degen{\ell-1}\beta)_\gamma \ge 0$.
	\end{claim}
	\begin{proof}[Proof of Claim]
		Similarly to the proof of \cref{claim2e}, we may write $\beta$ as $\beta = \face \ell \beta'$ and reduce the desired inequality to \cref{claim2c}.
	\end{proof}
	
	We see that $u$ satisfies (O2) by \cref{claim2a,claim2c,claim2e} and $v$ satisfies (O2) by \cref{claim2b,claim2d,claim2f}.
	Therefore $w = u \filler \ell v$ is indeed an $(m+1)$-simplex in $\O(-,n)$, and this completes the proof of \cref{dagparent}.
\end{proof}

\begin{lemma}\label{dagpairing}
	The set of non-degenerate simplices in $\O(-,n)\setminus A[n]$ can be partitioned into parent-child pairs.
\end{lemma}
\begin{proof}
	By \cref{dagchild}, the child construction defines a function from the set of non-degenerate simplices in $\O(-,n)\setminus A[n]$ satisfying $(\ddagger)$ to the set of those not satisfying $(\ddagger)$.
	Similarly, \cref{dagparent} implies that the parent construction defines a function in the other direction.
	We must show that they are inverse to each other.
	
	We first prove that the parent construction followed by the child construction is the identity.
	Let $x$ be a non-degenerate simplex in $\O(-,n) \setminus A[n]$ that does not satisfy $(\ddagger)$, and let $w$ be its parent.
	Then it follows from \cref{dagparent} that the child of $w$ is indeed $w\face{\level(w)} = w \face \ell = x$.
	
	For the other composite, let $x$ be a non-degenerate simplex in $\O(-,n) \setminus A[n]$ that satisfies $(\ddagger)$, and let $y$ be its child.
	Then it follows from \cref{dagchild} that the parent of $y$ is
	\[
	\check y \degen{\level(y)} + \bbar y \degen {\level(y)-1} = \check x \face \ell \degen{\ell} + \bbar x \face \ell \degen {\ell-1}.
	\]
	Since the condition $(\ddagger)$ implies that $\check x$ is degenerate at $\ell$ and $\bbar x$ is degenerate at $\ell-1$, this parent is indeed $x$.
\end{proof}

\subsection{Proof of \cref{main}}

The parent-child pairs correspond to the interior-face pairs of the horns to be filled.
We will fill these horns in lexicographically increasing order of their dimension, \emph{corank} and level.

\begin{definition}
	Let $x$ be an $m$-simplex in $\O(-,n) \setminus A[n]$.
	By the \emph{corank} of $x$, we mean
	\[
	\corank(x) \defeq \max\bigl\{|\alpha^{-1}(n)| : \alpha \in \supp(x)\bigr\}.
	\]
\end{definition}
\begin{remark}\label{rank corank}
	The corank of $x$ may be computed as
	\[
	\corank(x) = \dim(x) - \rank(x)+1.
	\]
	Using this observation, it is easy to deduce from \cref{dagchild,dagparent} that both the parent and child constructions preserve the corank.
\end{remark}

\begin{lemma}\label{dagfaces}
	Let $w$ be a non-degenerate $m$-simplex in $\O(-,n)\setminus A[n]$ with rank $r$, level $\ell$, and corank $c$.
	Suppose that $w$ satisfies $(\ddagger)$.
	Fix $k \in [m]$ with $k \neq \ell$.
	Then at least one of the following holds:
	\begin{itemize}
		\item $w \face k$ is degenerate;
		\item $w \face k$ is contained in $A[n]$;
		\item $w \face k$ is a non-degenerate simplex in $\O(-,n) \setminus A[n]$ satisfying $(\ddagger)$; or
		\item $w \face k$ is a non-degenerate simplex in $\O(-,n) \setminus A[n]$ not satisfying $(\ddagger)$, and we have
		\[
		\bigl(\corank(w\face k),\level(w \face k)\bigr) <_{\mathrm{lex}} (c,\ell).
		\]
	\end{itemize}
\end{lemma}
Here $<_{\mathrm{lex}}$ denotes the lexicographical order, so the assertion is that either we have $\corank(w\face k)<c$, or we have $\corank(w \face k) = c$ and $\level(w \face k)<\ell$.
\begin{proof}
	Assume that none of the first three disjuncts holds, that is, $w \face k$ is a non-degenerate simplex in $\O(-,n) \setminus A[n]$ not satisfying $(\ddagger)$.
	We would like to exhibit the inequality
	\[
	\bigl(\corank(w\face k),\level(w \face k)\bigr) <_{\mathrm{lex}} (c,\ell).
	\]
	Note that we have
	\[
	\supp(w\face k) \subset \{\alpha\face k : \alpha \in \supp(w)\}
	\]
	(but the two sides may not be equal).
	Since $|(\alpha\face k)^{-1}(n)| \le |\alpha^{-1}(n)|$ holds
	for any $\alpha : [m] \to [n]$, we can deduce $\corank(w \face k) \le c$.

	\begin{case}
		Suppose $k < \ell$.
		If $\corank(w \face k) < c$ then we are done, so assume $\corank(w \face k) = c$.
		We wish to show that $\level(w \face k) < \ell$.
		
		It follows from the relationship between rank and corank that $\rank(w \face k) = r-1$.
		Since we have $k < \ell \le r$, a simplicial operator $\alpha \in \supp(w)$ satisfies $\alpha(r) = n$ if and only if it satisfies $(\alpha\face k)(r-1) = n$.
		Hence we have $\widebar{\widebar{w \face k}} = \bbar w \face k$ and $\widecheck{w \face k} = \check w \face k$.
		It thus suffices to show that any $\alpha \in \supp(\check w)$ satisfies $(\alpha\face k)(\ell-1) = (\alpha \face k)(r-1)$, which indeed follows from $\alpha(\ell) = \alpha(r)$ and the inequality $k < \ell$.
	\end{case}
	\begin{case}
		Suppose $\ell < k < r$.
		Similarly to the previous case, we may assume $\corank(w \face k) = c$, and deduce that $\rank(w \face k) = r-1$, $\widebar{\widebar{w \face k}} = \bbar w \face k$ and $\widecheck{w \face k} = \check w \face k$.
		Since any $\alpha \in \supp(\check w)$ satisfies
		\[
		(\alpha\face k)(\ell) = \alpha(\ell) = \alpha(r) = (\alpha\face k)(r-1),
		\]
		we have $\level(w \face k) \le \ell$.
		We would like to show that this inequality is strict.
		
		Recall that $w$ satisfies $(\ddagger)$, that is, $\bbar w$ is degenerate at $\ell-1$.
		Since we have $\ell < k$, this implies that $\widebar{\widebar{w \face k}} = \bbar w \face k$ is also degenerate at $\ell-1$.
		So $\level(w \face k) = \ell$ would contradict our assumption that $w \face k$ does not satisfy $(\ddagger)$.
		Therefore we indeed have $\level(w \face k) < \ell$.
	\end{case}
	\begin{case}
		Suppose $k \ge r$.
		Observe that, in this case, if $\alpha \in \supp(w)$ satisfies $|\alpha^{-1}(n)| = c$ (or equivalently $\alpha(r)=n$) then $\alpha(k) = n$.
		Thus we have $|(\alpha\face k)^{-1}(n)| = c-1$ for any such $\alpha$.
		It then follows from $\supp(w\face k) \subset \{\alpha\face k : \alpha \in \supp(w)\}$ that $\corank(w \face k) < c$.
	\end{case}
	This completes the proof.
\end{proof}

\begin{lemma}\label{dagmarked}
	Let $x$ be a non-degenerate simplex in $\O(-,n) \setminus A[n]$ with level $\ell$.
	Suppose that $x$ is marked in $\O(-,n)$ and does not satisfy $(\ddagger)$, and let $w$ be its parent.
	Then the faces $w \face{\ell-1}$ and $w \face{\ell+1}$ are also marked in $\O(-,n)$.
\end{lemma}
\begin{proof}
	By \cref{marked}, the assumption that $x$ is marked is equivalent to every operator in $\supp(x)$ being degenerate.
	Recall from the proof of \cref{dagparent} that we have
	\[
	w \face{\ell+1} = (u \filler \ell v)\face{\ell+1} = u
	\]
	where $u = \check x + \bbar x \face \ell \degen{\ell-1}$ and $v = \check x \face{\ell-1}\degen{\ell-1}+\bbar x$.
	Since we have
	\[
	\supp(u) \subset \supp(\check x) \cup \{\alpha\face \ell \degen{\ell-1} : \alpha \in \supp(\bbar x)\}
	\]
	and every simplicial operator in the larger set is degenerate, $u$ is marked by \cref{marked}.
	Similarly $w\face{\ell-1}=v$ is marked.
\end{proof}

Now we are ready to prove our main theorem.

\begin{proof}[Proof of \cref{main}]
We proceed by induction on $n$.
The base case is trivial since $\iota_0 : \Delta[0] \to \O(-,0)$ is invertible.

For the inductive step, fix $n \ge 0$ and suppose that $\iota_{n-1}$ is an inner complicial anodyne extension.
Consider the factorisation
\[
\Delta[n] \to A[n] \incl \O(-,n)
\]
of the map $\iota_n$.
The first factor $\Delta[n] \to A[n]$ is isomorphic to the join of $\iota_{n-1}$ and the identity on $\Delta[0]$ by \cref{iota join}.
Hence this factor is an inner complicial anodyne extension by the inductive hypothesis and \cref{join}.

It remains to prove that the inclusion $A[n] \incl \O(-,n)$ is an inner complicial anodyne extension.
Let $I$ be the set
\[
I = \bigl\{(m,c,\ell) \in \NN^3 : c,\ell \le m\bigr\}
\]
equipped with the lexicographical ordering (so that it has order type $\omega$).
For each $(m,c,\ell) \in I$, let:
\begin{itemize}
	\item $P^{m,c,\ell}$ be the set of parent simplices of dimension $m$, corank $c$, and level $\ell$;
	\item $Q^{m,c,\ell} \subset P^{m,c,\ell}$ be the subset consisting of those $x$ such that $x \face \ell$ is marked in $\O(-,n)$; and
	\item $B^{m,c,\ell} \subset \O(-,n)$ be the smallest regular subset containing $A[n]$ and $P^{m',c',\ell'}$ for all $(m',c',\ell') \le_{\mathrm{lex}} (m,c,\ell)$.
\end{itemize}
Then the assignation $(m,c,\ell) \mapsto B^{m,c,\ell}$ defines a transfinite sequence in $\msSet$ whose composite is precisely the inclusion $A[n] \incl \O(-,n)$.
Hence it suffices to show that, for each immediate predecessor-successor pair $(m',c',\ell') <\lex (m,c,\ell)$, the inclusion $B^{m',c',\ell'} \incl B^{m,c,\ell}$ is an inner complicial anodyne extension.
The following claim will be useful.

\setcounter{theorem}{1}
\begin{claim}\label{missing simplices}
	The set of non-degenerate simplices in $B^{m,c,\ell} \setminus B^{m',c',\ell'}$ is precisely $\{w,w\face \ell : w \in P^{m,c,\ell}\}$.
\end{claim}
\begin{proof}[Proof of Claim]
	This follows from \cref{dagpairing,dagfaces}.
	Note that we have either $m' = m$ or $(m',c',\ell') = (m-1,m-1,m-1)$.
	This implies that $B^{m',c',\ell'}$ contains all $(m-2)$-simplices of $\O(-,n)$.
	Thus, if $w \in P^{m,c,\ell}$ has a degenerate face $w \face k$ (with $k \neq \ell$), this face must be contained in $B^{m',c',\ell'}$.
\end{proof}

Observe that, since each $w \in P^{m,c,\ell}$ may be written as $w = u \filler \ell v$, it induces a map $\asimp m \ell \to B^{m,c,\ell}$ by \cref{filler complicial}, which moreover restricts to $\horn m \ell \to B^{m',c',\ell'}$ by \cref{missing simplices}.
Consider the upper left pushout square in the following diagram.
\[
\begin{tikzcd}[row sep = large]
\ds\coprod_{w \in P^{m,c,\ell}}\horn m \ell
\arrow [r] 
\arrow [d]
\arrow [dr, phantom, "\ulcorner" very near end] &
\ds\coprod_{w \in P^{m,c,\ell}}\asimp m \ell
\arrow [d] & \\
B^{m',c',\ell'}
\arrow [r] &
C^{m,c,\ell}
\arrow [r] &
B^{m,c,\ell}\\
& \ds\coprod_{w \in Q^{m,c,\ell}} \asimpd m \ell
\arrow [u]
\arrow [r]
\arrow [ur, phantom, "\llcorner" very near end] &
\ds\coprod_{w \in Q^{m,c,\ell}} \asimpdd m \ell
\arrow [u]
\end{tikzcd}
\]
Again by \cref{missing simplices}, we may describe the pushout object $C^{m,c,\ell}$ as precisely $B^{m,c,\ell}$ except that all simplices of the form $w \face \ell$ with $w \in P^{m,c,\ell}$ are unmarked.
It then follows from \cref{dagmarked} that the inclusion $C^{m,c,\ell} \incl B^{m,c,\ell}$ may be written as a pushout as indicated above.
This completes the proof.
\end{proof}

\setcounter{theorem}{23}

\begin{remark}
	For $n \ge 1$, let $\Delta[n]_t$ denote the \emph{standard marked $n$-simplex} (obtained from $\Delta[n]$ by marking the unique non-degenerate $n$-simplex).
	Since
	\[
	\{\Delta[n] : n \ge 0\} \cup \{\Delta[n]_t : n \ge 1\}
	\]
	spans the ``standard'' dense subcategory of $\msSet$ (\emph{cf.}\;\cite[\textsection 5]{Street:oriented}, \cite[Observation 12]{Verity:I}, or \cite{OR:precomplicial}), one might be interested in proving an analogue of \cref{main} for those marked simplices.
	More precisely, one might consider $\O_n$ with the atomic $n$-cell collapsed to an identity (or inverted), and hope to prove its complicial nerve to be weakly equivalent to $\Delta[n]_t$.
	Unfortunately, such a result seems to require much more work than what we have developed here, mainly because these modified orientals cannot be treated using Steiner's machinery.
	The ``most fibrant'' object we can currently prove to be weakly equivalent to $\Delta[n]_t$ is $N\O_n$ with all $n$-simplices additionally marked (which is not actually fibrant), though we will omit the proof.
\end{remark}

\section*{Acknowledgements}
The author gratefully acknowledges the support of an Australian Mathematical Society Lift-Off Fellowship.
During the revision of this work, he was also supported by JSPS KAKENHI Grant Number JP21K20329.

The author would like to thank the anonymous referee whose careful reading and constructive comments greatly improved the rigour and the readability of this paper.

\bibliographystyle{alpha}
\bibliography{ref}

\end{document}